\def\today{\ifcase\month\or
  January\or February\or March\or April\or May\or June\or=
  July\or August\or September\or October\or November\or December\fi
  \space\number\day, \number\year}
\DeclareMathOperator{\sgn}{\mathrm{sgn}}
 \newtheorem{theorem}{Theorem}
 \newtheorem{lemma}[theorem]{Lemma}
 \newtheorem{proposition}[theorem]{Proposition}
 \newtheorem{corollary}[theorem]{Corollary}
 \theoremstyle{definition}
 \newtheorem*{example}{Example}
 \theoremstyle{remark}
\newtheorem{question}{Question}
 \newcommand{\R}{\mathbb{R}}
\newcommand{\var}{{\rm Var}}
\newcommand{\intav}[1]{\mathchoice {\mathop{\vrule width 6pt height 3 pt depth  -2.5pt
\kern -8pt \intop}\nolimits_{\kern -6pt#1}} {\mathop{\vrule width
5pt height 3  pt depth -2.6pt \kern -6pt \intop}\nolimits_{#1}}
{\mathop{\vrule width 5pt height 3 pt depth -2.6pt \kern -6pt
\intop}\nolimits_{#1}} {\mathop{\vrule width 5pt height 3 pt depth
-2.6pt \kern -6pt \intop}\nolimits_{#1}}}
\begin{document}

\title[BV Continuity for the uncentered maximal operator]{BV Continuity for the uncentered \\ Hardy--Littlewood maximal operator}
\author[Gonz\'{a}lez-Riquelme and Kosz]{Cristian Gonz\'{a}lez-Riquelme and Dariusz Kosz}
\subjclass[2010]{26A45, 42B25, 39A12, 46E35, 46E39, 05C12.}
\keywords{Maximal operators; continuity; bounded variation;}
\address{IMPA - Instituto de Matem\'{a}tica Pura e Aplicada\\
Rio de Janeiro - RJ, Brazil, 22460-320.}
\email{cristian@impa.br}
\address{ Faculty of Pure and Applied Mathematics,
	Wroc\l aw University of Science and Technology, 
	Wyb. Wyspia\'nskiego 27,
	50-370 Wroc\l aw, Poland.}
\email{dariusz.kosz@pwr.edu.pl}

\allowdisplaybreaks
\numberwithin{equation}{section}

\maketitle

\begin{abstract}
We prove the continuity of the map
$f \mapsto \widetilde{M}f$
from $BV(\mathbb{R})$ to itself, where $\widetilde{M}$ is the uncentered Hardy--Littlewood maximal operator. This answers a question of Carneiro, Madrid and Pierce.
\end{abstract}
\section{Introduction}
\subsection{A brief historical perspective and background}
The study of regularity properties for maximal operators has been an important topic of research over the last years. The most classical object in this context is the centered Hardy--Littlewood maximal operator, that for every $f\in L^{1}_{\text{loc}}(\mathbb{R}^d)$ and $x\in \mathbb{R}^d$ is defined as
$$Mf(x):=\sup_{r>0}\ \intav{B(x,r)}|f|,$$
where $\intav{B}g:=\frac{\int_{B}g}{m(B)}$ and $m$ is the $d$--dimensional Lebesgue measure. The uncentered Hardy--Littlewood maximal operator, denoted by $\widetilde{M}$, is defined analogously by taking the supremum over balls that contain the point
$x$ but that are not necessarily centered at $x$.

The program of studying the regularity of maximal functions was initiated by Kinnunen \cite{Kinnunen1997}, who proved that the map
\begin{align}\label{map}
f\mapsto Mf
\end{align}
is bounded on $W^{1,p}(\mathbb{R}^d)$ if $p>1.$ 
Since then, several authors have made many contributions in this direction, including connections with potential theory and partial differential equations  (see, e.g., \cite{BRS2018,BCHP2012,CFS2015,CS2013,KL1998,PPSS2017,RSW}).

An important direction has been the study of the continuity of the map \eqref{map} in several settings. Since it is not necessarily a sublinear map at the derivative level, its continuity does not follow from the boundedness. The first result answering a question of this kind was due to Luiro \cite{Luiro2007}, who proved that the map \eqref{map} is in fact continuous from $W^{1,p}(\mathbb{R}^d)$ to itself for each $p>1.$ 
\subsection{The endpoint case $p=1$}
This case is much more subtle. In fact, Kinnunen's result certainly does not hold because for every non trivial $f\in L^{1}(\mathbb{R}^d)$ we have $\widetilde{M}f\notin L^{1}(\mathbb{R}^d).$ Nevertheless, since we are interested at the derivative level of the maximal functions, the natural version of the problem at $p=1$ is to study the map 
\begin{align}\label{map2}
f\mapsto \nabla Mf,
\end{align}
for $f\in W^{1,1}(\mathbb{R}^d).$
In fact, the boundedness of the map \eqref{map2} 
from $W^{1,1}(\mathbb{R}^d)$ to $L^{1}(\mathbb{R}^d)$ is an important open problem for $d\ge 2.$ In dimension $d=1$ the answer is positive, as has been established for the uncentered version in \cite{Tanaka2002} and for the centered version in \cite{Kurka2010}. In \cite{Luiro2017}, Luiro proved that the map \eqref{map2} is bounded when restricted to radial data in the uncentered setting. Other boundedness results in the radial setting have been achieved in \cite{ BM2019,BM2019.2, CGR,GR,LM2017}.

An important work for our purposes is \cite{AP2007}, where the boundedness $$\var \big( \widetilde{M}f\big) \le \var(f),$$
for every $f\in BV(\mathbb{R}),$ was proved. Moreover, the authors proved that $\widetilde{M}f$ is absolutely continuous for $f\in BV(\mathbb{R}),$ providing then an improvement over the original function. 
\subsection{Continuity at the endpoint case}
In this article we are particularly interested in the continuity of the map \eqref{map2} at the endpoint $p=1.$ We notice that the methods outlined by Luiro \cite{Luiro2007} cannot be adapted to our case, since they depend on the fact that the map $$f\mapsto Mf$$ is bounded on $L^{p}(\mathbb{R}^d)$ for $p>1$.  
In \cite{CMP2017} the first continuity results at the endpoint were obtained. The authors proved, among other results, that 
the map 
\begin{align*}
f\mapsto \big (\widetilde{M}f\big)'
\end{align*}
is continuous from $W^{1,1}(\mathbb{R})$ to $L^1(\mathbb{R})$ (cf. \cite[Theorem 1]{CMP2017}). This result was recently extended to the $W^{1,1}_{\rm rad}(\R^d)$ setting (the subspace of $W^{1,1}(\R^d)$ consisting of radial functions) in \cite{CGRM}, where a more general approach, that can be applied in the context of other maximal functions, is proposed.

In \cite{CMP2017} the authors also consider the space $BV(\mathbb{R})$ endowed with the norm $\|f\|_{BV}:=|f(-\infty)|+\var (f).$  About this, they asked the following question:
\begin{question}{(Question B in \cite{CMP2017})}
 Is the map $\widetilde{M} : BV(\mathbb{R})\to BV(\mathbb{R})$ continuous?
\end{question}
This question, in case of being answered affirmatively, would provide a generalization of  \cite[Theorem 1]{CMP2017} (since $W^{1,1}(\mathbb{R})$ embeds isometrically in $BV(\mathbb{R})$). It is important to notice that, in general, the continuity in the $BV(\mathbb{R})$ setting is more delicate that in the $W^{1,1}(\mathbb{R})$ setting. An example of this is that in the fractional setting the analogue of \cite[Theorem 1]{CMP2017} holds (see \cite{Madrid2017}) but the answer to the analogue of the previous question is negative (see \cite[Theorem 3]{CMP2017}).

The main goal of the present manuscript is to answer this question. We prove the following.
\begin{theorem}\label{maintheorem}
The map $\widetilde{M}:BV(\mathbb{R})\to BV(\mathbb{R})$ is continuous.

\end{theorem}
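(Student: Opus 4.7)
Convergence in $BV(\R)$ implies uniform convergence on $\R$ via the estimate
$|f_n(x) - f(x)| \le |f_n(-\infty) - f(-\infty)| + \var(f_n - f)$,
so $\widetilde{M}f_n \to \widetilde{M}f$ uniformly, and in particular their values at $-\infty$ converge. Since each of $\widetilde{M}f_n$ and $\widetilde{M}f$ is absolutely continuous by \cite{AP2007}, proving the theorem reduces to showing $(\widetilde{M}f_n)' \to (\widetilde{M}f)'$ in $L^1(\R)$, because then $\var(\widetilde{M}f_n - \widetilde{M}f) = \|(\widetilde{M}f_n - \widetilde{M}f)'\|_{L^1(\R)}$ tends to $0$.

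The analysis rests on the structural theorem of Aldaz--P\'erez L\'azaro, which yields the decomposition $\R = C_f \sqcup D_f$, where $C_f = \{\widetilde{M}f = |f|\}$ is the contact set and $D_f = \{\widetilde{M}f > |f|\}$ is the open detached set, itself a countable disjoint union of maximal intervals $(a_i, b_i)$. On each detached interval, $\widetilde{M}f$ has no local maximum and is given by an explicit supremum of averages of $|f|$, with boundary values $\widetilde{M}f(a_i) = |f(a_i)|$ and $\widetilde{M}f(b_i) = |f(b_i)|$. Absolute continuity of $\widetilde{M}f$ further forces $(\widetilde{M}f)'=(|f|)'$ almost everywhere on $C_f$.

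The plan is to partition $\R$ according to the joint contact/detached positions of $f$ and $f_n$, and to control $\|(\widetilde{M}f_n)' - (\widetilde{M}f)'\|_{L^1}$ piece by piece. On $C_f \cap C_{f_n}$ the difference equals $(|f_n|)' - (|f|)'$, which one would control using $BV$-convergence of $f_n$ to $f$. On $D_f \cap D_{f_n}$, where both functions are simultaneously on detached components, uniform convergence of $f_n$ together with the explicit sup-of-averages formula and the monotonicity on each component should yield convergence of the derivatives in $L^1$. The delicate contributions come from the transition sets $C_f \cap D_{f_n}$ and $D_f \cap C_{f_n}$: these must contribute negligibly in the limit, which should follow from the uniform convergence $|f_n| \to |f|$ preventing a detached interval of one function from persisting outside a small neighborhood of the detached set of the other.

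The main obstacle lies in the quantitative handling of the transition regions. One must rule out, for instance, the accumulation of arbitrarily many small detached intervals of $f_n$ on the contact set $C_f$, which could a priori contribute a non-vanishing amount to the $L^1$ norm of the derivative difference. This requires a uniform tightness property: the sum of oscillations of $\widetilde{M}f_n$ over detached intervals contained in any set of small measure must be uniformly small in $n$. Such tightness should be obtainable as a quantitative refinement of the absolute continuity statement of \cite{AP2007}, combined with a careful combinatorial tracking of how the detached intervals of $f_n$ relate to those of $f$ under uniform perturbation. This bookkeeping of the detached intervals is where the main technical difficulty will concentrate.
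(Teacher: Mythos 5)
Your reduction (uniform convergence of $\widetilde{M}f_n$, absolute continuity from \cite{AP2007}, hence it suffices to show $(\widetilde{M}f_n)'\to(\widetilde{M}f)'$ in $L^1(\R)$) and your decomposition of $\R$ into the contact set and the detached set, further split according to the joint positions of $f$ and $f_n$, match the skeleton of the paper's argument. But the proposal has a genuine gap exactly at the point you flag as ``the main technical difficulty'': the uniform tightness property --- that the oscillation of $\widetilde{M}f_n$ over detached intervals sitting inside a set of small measure is small uniformly in $n$ --- is not a refinement of the absolute continuity argument of \cite{AP2007}; it is equivalent to the uniform integrability of $\{(\widetilde{M}f_n)'\}_n$, which together with pointwise a.e.\ convergence is precisely the $L^1$ convergence you are trying to prove. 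As stated, your plan restates the problem rather than solving it, and the same missing ingredient also undermines the step on $D_f\cap D_{f_n}$, where pointwise convergence of the derivatives (itself requiring a compactness argument for the maximizing intervals, cf.\ Lemmas \ref{accumulation} and \ref{formula}) does not upgrade to $L^1$ convergence without some control on the mass.

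The paper's resolution is a single global statement, Proposition \ref{keyproposition}: $\var\big(\widetilde{M}f_j\big)\to\var\big(\widetilde{M}f\big)$. Combined with lower semicontinuity, the vanishing of $(\widetilde{M}f)'$ a.e.\ on the contact set (Lemma \ref{zeroderivative}), pointwise a.e.\ convergence of the derivatives on $E$ (Lemma \ref{pointwiseconvergence}) and the Brezis--Lieb lemma, this convergence of norms is what rules out both the escape of derivative mass on $D_f\cap D_{f_n}$ and the accumulation of many small detached intervals of $f_n$ over $C_f$. Its proof is the genuinely new idea you would need: given a near-optimal partition for $\var\big(\widetilde{M}f_j\big)$ refining a coarse partition that nearly realizes $\var(|f_j|)$, one converts each oscillation of $\widetilde{M}f_j$ (arranged in the alternating ``property (V)'' pattern) into a comparable oscillation of $|f_j|$ by selecting, near each local maximum of $\widetilde{M}f_j$, a point where $|f_j|$ exceeds the maximizing average; since the coarse partition already captures almost all of $\var(|f_j|)$ (via Lemma \ref{variationconvergence}), the excess oscillation of $\widetilde{M}f_j$ must be small. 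No such mechanism transferring oscillations of the maximal function back to oscillations of $|f_j|$ appears in your plan, and without it the bookkeeping of the transition regions cannot be closed.
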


Several of the arguments in \cite{CMP2017} (also the arguments in \cite{CGRM}) rely on the regularity of the original function, therefore they are not enough to conclude Theorem \ref{maintheorem}. In fact, the authors (also the authors of \cite{CGRM}) used in their work a reduction of the problem to the analogous question stated for ``lateral'' maximal operators, but in our case that reduction causes several problems. For instance, the one-sided maximal function of a function in $BV(\mathbb{R})$ is not necessarily continuous, which brings additional difficulties. Therefore, a different idea is required in order to achieve the result. Our methods, although inspired by \cite{CMP2017}, provide a new approach to this kind of problems.

Having taken into account the main result obtained here, we summarize the situation of the {\it endpoint continuity program} (originally proposed in \cite[Table 1]{CMP2017}) in the table below. The word YES in a box means that the continuity of the corresponding map has been proved, whereas the word NO means that it has been shown that it fails. The remaining boxes are marked as OPEN problems. We notice that after this work the only open problems in this program are the ones related to the centered Hardy--Littlewood maximal operator in a continuous setting.
\begin{table}[h]
\renewcommand{\arraystretch}{1.3}
\centering
\caption{Endpoint continuity program}
\label{Table-ECP}
\begin{tabular}{|c|c|c|c|c|}
\hline
 \raisebox{-1.3\height}{------------}&  \parbox[t]{2.8cm}{  $W^{1,1}-$continuity; \\ continuous setting} &  \parbox[t]{2.8cm}{  $BV-$continuity; \\ continuous setting} & \parbox[t]{2.6cm}{  $W^{1,1}-$continuity; \\ discrete setting} &  \parbox[t]{2.5cm}{  $BV-$continuity; \\ discrete setting} \\ [0.5cm]
\hline
 \parbox[t]{3.3cm}{ Centered classical \\ maximal operator} &  \raisebox{-0.8\height}{OPEN} & \raisebox{-0.8\height}{OPEN} & \raisebox{-0.8\height}{YES$^2$}  &\raisebox{-0.8\height}{YES$^4$}\\[0.5cm]
 \hline
  \parbox[t]{3.3cm}{ Uncentered classical \\ maximal operator} &  \raisebox{-0.8\height}{YES$^1$} &  \raisebox{-0.8\height}{YES: Theorem \ref{maintheorem}} &  \raisebox{-0.8\height}{YES$^2$} &  \raisebox{-0.8\height}{YES$^1$} \\[0.5cm]
 \hline
 \parbox[t]{3.3cm}{ Centered fractional \\ maximal operator} &  \raisebox{-0.8\height}{YES$^5$} &  \raisebox{-0.8\height}{NO$^1$} &  \raisebox{-0.8\height}{YES$^3$} & \raisebox{-0.8\height}{NO$^1$}  \\[0.5cm]
 \hline
\parbox[t]{3.3cm}{ Uncentered fractional \\ maximal operator} &  \raisebox{-0.8\height}{YES$^4$} &  \raisebox{-0.8\height}{NO$^1$} &  \raisebox{-0.8\height}{YES$^3$}  & \raisebox{-0.8\height}{NO$^1$} \\[0.5cm]
 \hline
\end{tabular}
\vspace{0.05cm}
\flushleft{
\ \ \footnotesize{$^1$ Result previously obtained in \cite{CMP2017}.\\
\ \ $^2$ Result previously obtained in \cite[Theorem 1]{CH2012}. \\
\ \ $^3$ Result previously obtained in \cite[Theorem 3]{CM2015}.\\
\ \ $^4$ Result previously obtained in \cite{Madrid2017}.\\
\ \ $^5$ Result previously obtained in \cite{BM2019.2}.
}}
\end{table}
\section{Proof of Theorem \ref{maintheorem}}
\subsection{Preliminaries}
In this subsection we develop the main tools required in our work.
We start by stating the following result which describes the behavior of the maximal function at infinity.
\begin{lemma}\label{infty}
Given $f\in BV(\mathbb{R})$ let $|f|(\infty):=\underset{x \to \infty}{\lim}|f|(x)$ and $|f|(-\infty):=\underset{x\to -\infty}{\lim}|f|(x).$ Then 
$$\lim_{x\to \infty}\widetilde{M}f(x)=\lim_{x\to -\infty}\widetilde{M}f(x)=c,$$
where $c=\max \{|f|(\infty),|f|(-\infty)\}$.
\end{lemma}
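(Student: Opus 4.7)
The plan is to establish the matching bounds $\liminf_{|x|\to\infty}\widetilde{M}f(x)\geq c$ and $\limsup_{|x|\to\infty}\widetilde{M}f(x)\leq c$. By reflection symmetry (replacing $f$ by $f(-\,\cdot\,)$) it suffices to handle $x\to+\infty$. The standing facts I would use are that $f\in BV(\R)$ is essentially bounded with $L:=\|f\|_{L^\infty}<\infty$, and the one-sided limits $|f|(\pm\infty)$ exist; in particular, for any $\varepsilon>0$ there is a $K>0$ with $|f(y)|<c+\varepsilon/2$ whenever $|y|>K$.

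For the lower bound I would split on whether $c=|f|(\infty)$ or $c=|f|(-\infty)$. In the first case, averaging $|f|$ over a short interval straddling a point $x$ deep inside the region $\{y>K\}$ produces a value of at least $c-\varepsilon$. In the second case, for any fixed $x$ and any $M\gg x$, I would use the interval $(-M,x+1)$ and estimate
\[
\frac{1}{M+x+1}\int_{-M}^{x+1}|f|\;\geq\;\frac{(M-K)(c-\varepsilon)}{M+x+1}\;\xrightarrow[M\to\infty]{}\;c-\varepsilon,
\]
which yields $\widetilde{M}f(x)\geq c-\varepsilon$ pointwise in $x$. Since $\varepsilon$ is arbitrary, the $\liminf$ bound follows in both cases.

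For the upper bound, fix $\varepsilon>0$, choose $K$ as above, and take $x>K+4KL/\varepsilon$. Given any interval $[a,b]\ni x$, one of two things occurs. If $a>K$, then $|f|<c+\varepsilon/2$ throughout $[a,b]$ and the average is obviously controlled; otherwise $a\leq K$ forces $b-a\geq x-K\geq 4KL/\varepsilon$, so by estimating $|f|\leq L$ on $[a,b]\cap[-K,K]$ and $|f|\leq c+\varepsilon/2$ on the complement,
\[
\frac{1}{b-a}\int_a^b|f|\;\leq\;\frac{2KL}{b-a}+\bigl(c+\tfrac{\varepsilon}{2}\bigr)\;\leq\;c+\varepsilon.
\]
Taking the supremum over admissible intervals gives $\widetilde{M}f(x)\leq c+\varepsilon$ for all sufficiently large $x$, and the mirrored argument handles $x\to-\infty$. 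The only genuinely delicate point is this geometric dichotomy in the upper bound, namely arranging that short intervals containing a faraway $x$ are automatically confined to the good region $\{|y|>K\}$, while long intervals are long enough that the contribution of $|f|$ on the compact bad region $[-K,K]$ is rendered negligible — everything else is routine.
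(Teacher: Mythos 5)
Your proposal is correct and follows essentially the same route as the paper: a lower bound obtained by averaging over intervals stretching toward the side where $|f|$ tends to $c$, and an upper bound obtained from the dichotomy that an interval containing a faraway point either avoids the compact ``bad'' region $[-K,K]$ or is so long that the contribution of that region to the average is $O(KL/|I|)=O(\varepsilon)$. The only cosmetic differences are that the paper normalizes to $f\ge 0$ with $c=f(\infty)$ instead of splitting cases in the lower bound, and phrases the dichotomy in terms of the length of $I$ rather than the position of its left endpoint.
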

\begin{proof}
	Without loss of generality we assume that $f \geq 0$ and $c = f(\infty)$. Observe that
	$$
	\widetilde{M} f (x) \geq \lim_{r \rightarrow \infty} \intav{(x-1,x+r)}f 
	= c 
	$$ 
	holds for any $x \in \mathbb{R}$. Fix $\varepsilon > 0$ and let $N_0 > 0$ be such that $f(x) \leq c + \frac{\varepsilon}{2}$ for $|x| > N_0$. We choose $N_1 > N_0$ satisfying
	$$
	\frac{2N_0 \|f\|_\infty}{N_1 - N_0} \leq \frac{\varepsilon}{2}.
	$$  
	Consider $x_0$ satisfying $|x_0| > N_1$ and any interval $I \ni x_0$. If $|I| < N_1 - N_0$, then clearly
	$$
	\intav{I}f 
	\leq c + \frac{\varepsilon}{2}.
	$$
	On the other hand, if $|I| \geq N_1 - N_0$, then
	$$
	\intav{I}f
	\leq \frac{1}{|I|} \int_{I \cap [-N_0,N_0]^{\rm c}} f(x) \, dx + \frac{1}{N_1 - N_0} \int_{[-N_0,N_0]} f(x) \, dx \leq c + \varepsilon.
	$$
	Since $\varepsilon > 0$ is arbitrary, the claim follows.
\end{proof}

The next goal is to use the $BV(\mathbb{R})$ norm to control the difference between two $BV(\mathbb{R})$ functions or between their maximal functions at a given point $x$. The following estimates, although very basic, will be extremely useful later on.

\begin{lemma}\label{uniformconvergence}
	Let $f, g \in BV(\mathbb{R})$. Then 
$$
|f(x) - g(x)| \leq 2 \|f-g\|_{BV} \quad {\rm and } \quad \big|\widetilde{M} f(x) - \widetilde{M}g(x)\big| \leq 2 \|f-g\|_{BV} 
$$
hold for any $x \in \mathbb{R}$.
\end{lemma}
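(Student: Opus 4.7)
The plan is to reduce both inequalities to the pointwise bound $\|h\|_\infty \le \|h\|_{BV}$ for any $h \in BV(\mathbb{R})$, applied to $h = f-g$, together with the elementary sublinearity of $\widetilde{M}$.

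First I would observe that every $h \in BV(\mathbb{R})$ has a well-defined limit $h(-\infty)$ (the Cauchy criterion for the total variation forces the existence of both limits at $\pm\infty$), so that the quantity $|h(-\infty)| + \var(h)$ makes sense. Then, writing $h(x) = h(-\infty) + (h(x) - h(-\infty))$ and using the basic variation bound $|h(x) - h(-\infty)| = \lim_{y \to -\infty} |h(x) - h(y)| \le \var(h)$, one obtains
$$
|h(x)| \le |h(-\infty)| + \var(h) = \|h\|_{BV}.
$$
Applying this to $h = f - g$ yields the first inequality (in fact with constant $1$ instead of $2$).

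For the second inequality, I would use sublinearity of $\widetilde{M}$ in the form
$$
\big|\widetilde{M} f(x) - \widetilde{M} g(x)\big| \le \widetilde{M}(f-g)(x),
$$
which follows by applying the triangle inequality $|f| \le |g| + |f-g|$ inside each averaging integral and taking suprema (and symmetrizing in $f$ and $g$). Since the trivial $L^\infty$ bound gives $\widetilde{M}u(x) \le \|u\|_\infty$, combining with the first step yields $\widetilde{M}(f-g)(x) \le \|f-g\|_{BV}$, which already implies the claim.

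There is no genuine obstacle here: the constant $2$ in the statement is not sharp and is presumably kept for notational convenience in later applications (for instance, if the authors want to pass to equivalent norms without tracking exact constants). Both parts of the lemma are short, essentially structural consequences of the definition of $\|\cdot\|_{BV}$ and the sublinearity of the maximal operator, so the proof amounts to recording these two observations.
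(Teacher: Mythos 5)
Your proof is correct and follows essentially the same route as the paper: both parts rest on the pointwise bound $|h(x)|\le |h(-\infty)|+\var(h)=\|h\|_{BV}$ for $h=f-g$, and your sublinearity step $|\widetilde{M}f-\widetilde{M}g|\le\widetilde{M}(f-g)\le\|f-g\|_\infty$ is just a repackaging of the paper's insertion of the triangle inequality into the averages over intervals $I\ni x$. Your observation that the constant $2$ is not sharp is also accurate.
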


\begin{proof}
The first inequality follows since
$$
|f(x) - g(x)| \leq |(f(x) - g(x)) - (f(-\infty) - g(-\infty))| + |f(-\infty) - g(-\infty)|.
$$
Now, assume $\widetilde{M}f(x) \geq \widetilde{M} g(x)$. By the first part of the lemma for any $I \ni x$ we have
$$
\intav{I}|g| \geq \intav{I}|f| - \intav{I}|g-f| \geq \intav{I}|f| - 2 \|f-g\|_{BV}.
$$
Thus, $\widetilde{M} g(x) \geq \widetilde{M} f(x) - 2 \|f-g\|_{BV}$ and the second part follows as well. 
\end{proof}

Contrasting with the $W^{1,1}(\mathbb{R})$ setting (see \cite[Lemma 14]{CMP2017}), in our context to make the reduction to the case $f \geq 0$ is much more problematic. In order to deal with this issue we require several results describing the relations between $f$ and $|f|$.
 
In the following, for given $g\in BV(\mathbb{R})$ we define $\underset{y\uparrow x}{\lim}\ g(y)=:g(x^{-})$ and $\underset{y \downarrow x}{\lim}\ g(y)=:g(x^{+})$. Also, for each $-\infty \le a<b\le \infty$ we introduce the quantity 
$$
\var_{(a,b)}(g):= \sup \Big\{ \sum_{i=1}^{K} |g(a_{i}) - g(a_{i-1})| \Big\},
$$ 
where the supremum is taken over all $K \in \mathbb{N}$ and all sequences $a < a_0 < \dots < a_K < b$ (notice that if $g$ is continuous at $a$ and $b$, then the sequences satisfying $a = a_0 < \dots < a_K = b$ can be considered instead and the supremum will not change). For a given partition $\mathcal{P} = \{ a_0 < a_1 < \dots < a_K\}$ we denote $\var(g, \mathcal{P}) := \displaystyle \sum_{i=1}^{K} |g(a_{i}) - g(a_{i-1})|$. Finally, we write $D_{l}(g):=\{x\in \mathbb{R};g(x)\neq g(x^{-})\}$ and $D_{r}(g):=\{x\in \mathbb{R};g(x)\neq g(x^{+})\}.$

\begin{lemma}\label{modulusformula}
Fix $f\in BV(\mathbb{R}).$ Then for any $-\infty \le a<b\le \infty$ we have 
\begin{align*}
\var_{(a,b)}(f)-\var_{(a,b)}(|f|) & =\sum_{x\in D_{l}(f)\cap (a,b)}|f(x)-f(x^{-})|-\big||f|(x)-|f|(x^{-})\big| \\ & \quad +\sum_{x\in D_{r}(f)\cap (a,b)}|f(x)-f(x^{+})|-\big||f|(x)-|f|(x^{+})\big|.
\end{align*}
\end{lemma}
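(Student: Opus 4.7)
The proof hinges on the pointwise identity
$$
|u - v| - \bigl||u| - |v|\bigr| = 2\min(|u|, |v|) \cdot \mathbf{1}[uv < 0] \qquad (u, v \in \mathbb{R}),
$$
which can be verified by case analysis. Summed over a partition $\mathcal{P} = \{a_0 < \cdots < a_K\}$ it yields
$$
\var(f, \mathcal{P}) - \var(|f|, \mathcal{P}) \;=\; \sum_{i\,:\,f(a_{i-1})f(a_i) < 0} 2\min\bigl(|f(a_{i-1})|, |f(a_i)|\bigr) \;\ge\; 0,
$$
so the defect is concentrated on sign-change intervals and in particular $\var_{(a,b)}(|f|) \le \var_{(a,b)}(f)$.

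First I would verify the lemma when $f$ is continuous on $(a,b)$: the right-hand side of the claimed identity is zero, so the claim reduces to $\var_{(a,b)}(f) = \var_{(a,b)}(|f|)$. If a partition $\mathcal{P}$ contains an interval with $f(a_{i-1}) f(a_i) < 0$, continuity of $f$ lets me insert $a^\ast \in (a_{i-1}, a_i)$ with $f(a^\ast) = 0$; the refined partition $\mathcal{P}^\ast$ satisfies $\var(f, \mathcal{P}^\ast) = \var(f, \mathcal{P})$ (since $|f(a_{i-1})| + |f(a_i)| = |f(a_i) - f(a_{i-1})|$ at opposite-sign endpoints), while $\var(|f|, \mathcal{P}^\ast) = \var(|f|, \mathcal{P}) + 2\min(|f(a_{i-1})|, |f(a_i)|)$. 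Iterating over all sign-change intervals gives $\var(f, \mathcal{P}) \le \var(|f|)$ for every $\mathcal{P}$, hence $\var(f) \le \var(|f|)$, and the reverse is immediate.

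For the general case I would pass to the limit along refining partitions. The set $E := (D_l(f) \cup D_r(f)) \cap (a,b)$ is at most countable, and a standard test-partition argument yields $\sum_{x \in E}(|f(x) - f(x^-)| + |f(x) - f(x^+)|) \le 2\var(f)$, so the nonnegative summands $J(x)$ on the right-hand side of the lemma are absolutely summable with total $R < \infty$. Given $\varepsilon > 0$, pick a finite $F \subset E$ with $\sum_{x \in E \setminus F} J(x) < \varepsilon$ and an increasing sequence of partitions $(\mathcal{P}_n)$ with $F \subset \mathcal{P}_1$, $\bigcup_n \mathcal{P}_n$ dense in $(a,b)$ and containing $E$. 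Monotonicity of $\var(\cdot, \mathcal{P})$ under refinement, together with density at jump points, yields $\var(f, \mathcal{P}_n) \to \var_{(a,b)}(f)$ and $\var(|f|, \mathcal{P}_n) \to \var_{(a,b)}(|f|)$, so
$$
\var_{(a,b)}(f) - \var_{(a,b)}(|f|) \;=\; \lim_n \Bigl(\var(f, \mathcal{P}_n) - \var(|f|, \mathcal{P}_n)\Bigr).
$$
For large $n$ the intervals of $\mathcal{P}_n$ split into those with an endpoint in $F$ and the rest. The former contribute $\sum_{x \in F} J(x) + o(1)$ as the partition neighbors of each $x \in F$ converge to $x^\pm$, while the latter, by applying the same local identity to remaining jumps in $E \setminus F$ and Step~1 to the intermediate continuity stretches, contribute in the limit exactly $\sum_{x \in E \setminus F} J(x)$. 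Summing gives $\lim_n S_n = R$, proving the claim.

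The principal obstacle is the rigorous analysis of the non-touching intervals, since $E \setminus F$ may still be infinite and a naive iteration of the same argument risks circularity. A clean workaround is the Lebesgue-type decomposition $f = f_c + f_j$ into a continuous $BV$ function $f_c$ and a pure-jump $BV$ function $f_j$, for which $\var(f) = \var(f_c) + \var(f_j)$. One checks directly that $|f|_c = |f_c|$ (so that $\var(|f|) = \var(|f_c|) + \var(|f|_j)$), applies Step~1 to get $\var(f_c) = \var(|f_c|)$, and computes $\var(f_j) - \var(|f|_j) = R$ from the definition of pure-jump variation as a countable sum. Subtracting the two identities yields the lemma without any tail-control subtlety.
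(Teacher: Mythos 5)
Your pointwise identity $|u-v|-\bigl||u|-|v|\bigr|=2\min(|u|,|v|)\cdot\mathbf{1}[uv<0]$ is correct and is a clean way to see that the defect $\var(f,\mathcal{P})-\var(|f|,\mathcal{P})$ lives on sign-change intervals, and your Step 1 (the case of continuous $f$, inserting zeros by the intermediate value theorem) is fine. The problem is that neither of your two routes through the general case closes. You yourself flag the obstacle in the refining-partitions argument: the intervals of $\mathcal{P}_n$ meeting no point of $F$ can still change sign, either across the small jumps in $E\setminus F$ or across genuine zeros of $f$ interlaced with jumps, and you give no mechanism forcing their total contribution down to $\sum_{x\in E\setminus F}J(x)+o(1)$. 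This is exactly where the paper does its real work: in step (iii) of its construction it takes each residual sign-change pair $\{x_k,y_k\}$ and inserts either a point $z_k^\circ$ where $|f|<2^{-k}\varepsilon$ (killing that interval's contribution) or a pair $\{\widetilde z_k, z_k\}$ straddling a jump in the tail, so that after refinement the entire defect is carried, up to $O(\varepsilon)$, by terms that converge to the one-sided jump expressions. Your sketch omits this step, and without it the upper bound $\var_{(a,b)}(f)-\var_{(a,b)}(|f|)\le R$ is not established.

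The proposed ``clean workaround'' does not repair this, because it rests on a false identity. It is not true that $(|f|)_c=|f_c|$. Take $f(x)=-1$ for $x\le -1$, $f(x)=x$ for $-1<x<0$, $f(x)=x-2$ for $0\le x\le 1$, and $f(x)=-1$ for $x>1$. Then $f_j=-2\chi_{[0,\infty)}$ and $f_c(x)=\max(\min(x,1),-1)$, so $|f_c|(x)=|x|$ on $[-1,1]$; but $|f|$ jumps by $+2$ at $0$, so $(|f|)_c=|f|-2\chi_{[0,\infty)}=-f_c$, which equals $-x$ on $[0,1]$ and differs from $|f_c|$ there. What your subtraction scheme actually requires is only the weaker equality of variations $\var\bigl((|f|)_c\bigr)=\var(f_c)$; but since $\var(f_j)-\var\bigl((|f|)_j\bigr)$ is by definition the right-hand side $R$ of the lemma, that equality of variations is \emph{equivalent} to the lemma itself, so deriving it from the false pointwise identity is circular in effect. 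To make the decomposition route work you would need an independent proof that the diffuse parts of $Df$ and $D|f|$ have equal total mass (e.g.\ via a chain-rule/coarea argument for $t\mapsto|t|$ handling the set where the precise representative of $f$ vanishes), which is a genuinely nontrivial statement you have not supplied. As it stands, the proof is incomplete.
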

\begin{proof}
Fix $-\infty\le a<b\le \infty$. We write $D_{l}(f)\cap (a,b)=:\{x_{l,n};n\in \mathbb{N}\}$ and $D_{r}(f)\cap (a,b)=:\{x_{r,n};n\in \mathbb{N}\}$, assuming that both of these sets are infinite (the other cases can be treated very similarly). Given $\varepsilon>0$ we choose a partition $\mathcal{P} \subset (a,b)$ such that
$$\var(f,\mathcal{P})>\var_{(a,b)}(f)-\varepsilon$$
and $$\var(|f|,\mathcal{P})>\var_{(a,b)}(|f|)-\varepsilon.$$
Then for fixed $N\in \mathbb{N}$ we construct $\widetilde {\mathcal{P}}=\widetilde {\mathcal{P}}(N) \subset (a,b)$ by adding to $\mathcal{P}$ (if needed) some extra points. The procedure consists of the following three steps.
\begin{enumerate}[label=(\roman*)]
	\itemsep0.25em
    \item We set $\mathcal{P}_1=\mathcal{P}\cup \{x_{l,n},x_{r,n};n\le N\}.$
    \item For each $n\le N$ we choose $\widetilde{x}_{l,n}<x_{l,n}$ such that 
    $$\mathcal{P}_1\cap (\widetilde{x}_{l,n},x_{l,n})=\emptyset$$ and
    $|f(x_{l,n}^-)-f(\widetilde{x}_{l,n})|<2^{-n}\varepsilon.$
    Similarly, we choose $\widetilde{x}_{r,n}>x_{r,n}$ such that 
     $$\mathcal{P}_1\cap (x_{r,n},\widetilde{x}_{r,n})=\emptyset$$ and
    $|f(x_{r,n}^+)-f(\widetilde{x}_{r,n})|<2^{-n}\varepsilon.$
    Then we set $\mathcal{P}_2=\mathcal{P}_1\cup \{\widetilde{x}_{l,n},\widetilde{x}_{r,n}; n\le N\}.$
    \item For $K=K(\mathcal{P}_2)$, we let $\{\{x_k,y_k\};k\le K\}$ be the set of all pairs $\{x,y\}\subset \mathcal{P}_2$ satisfying $x<y$ with $(x,y)\cap \mathcal{P}_2=\emptyset$ and $f(x)f(y)<0,$ which are not of the form $\{\widetilde{x}_{l,n},x_{l,n}\}$ or $\{x_{r,n},\widetilde{x}_{r,n}\}$. Let $k\le K$. If there exists $z_{k}^{\circ}\in (x_k,y_k)$ such that $|f(z_k^{\circ})|<2^{-k}\varepsilon$, then we just add $z_k^{\circ}$ to $\mathcal{P}_2.$ If not, then at least one of the sets 
    $$I_{k,l} := (x_k,y_k]\cap \{z; \sgn(f(z)f(y_k))=\sgn(f(z^{-})f(x_k))=1\}$$ and
    $$I_{k,r} := [x_k,y_k)\cap \{z; \sgn(f(z^+)f(y_k))=\sgn(f(z)f(x_k))=1\}$$ must be non-empty (here $\sgn(x)$ is the usual sign function taking the value of $-1$, $0$, or $1$, if $x<0$, $x=0$, or $x>0$, respectively). Assume $I_{k,l} \neq \emptyset$ (the other case is similar) and choose $z_k \in I_{k,l}$. Then $z_k=x_{l,n}$ for some $n>N.$ We find $\widetilde{z}_{k}\in (x_k,z_k)$ such that $|f(\widetilde{z}_k)-f(z_k^{-})|<2^{-k}\varepsilon$ (in particular, we have $\sgn(f(x_k))=\sgn(f(\widetilde{z}_{k}))$), and add both $z_k$ and $\widetilde{z}_k$ to $\mathcal{P}_2.$ The above process terminates after $K$ steps and we denote the final collection of points by $\widetilde{\mathcal{P}}.$
\end{enumerate}
Having constructed $\widetilde{\mathcal{P}}$ we see that 
\begin{align*}
\var_{(a,b)}(f)-\var_{(a,b)}(|f|)&\ge \var(f,\widetilde{\mathcal{P}})-\var(|f|,\widetilde{\mathcal{P}})-\varepsilon\\
&\ge \sum_{n=1}^{N}|f(x_{l,n})-f(\widetilde{x}_{l,n})|-\big||f|(x_{l,n})-|f|(\widetilde{x}_{l,n})\big|\\
&\quad +\sum_{n=1}^{N}|f(x_{r,n})-f(\widetilde{x}_{r,n})|-\big||f|(x_{r,n})-|f|(\widetilde{x}_{r,n})\big|-\varepsilon\\
&\ge \sum_{n=1}^{N}|f(x_{l,n})-f(x_{l,n}^{-})|-\big||f|(x_{l,n})-|f|({x}_{l,n}^{-})\big|\\
&\quad +\sum_{n=1}^{N}|f(x_{r,n})-f(x_{r,n}^{+})|-\big||f|(x_{r,n})-|f|(x_{r,n}^{+})\big|-5\varepsilon.
\end{align*}
Also, we obtain
\begin{align*}
\var_{(a,b)}(f)-\var_{(a,b)}(|f|)&\le \var(f,\widetilde{\mathcal{P}})-\var(|f|,\widetilde{\mathcal{P}})+\varepsilon\\
&\le \sum_{n=1}^{\infty}|f(x_{l,n})-f(x_{l,n}^{-})|-\big||f|(x_{l,n})-|f|({x}_{l,n}^{-})\big|\\
&\quad +\sum_{n=1}^{\infty}|f(x_{r,n})-f(x_{r,n}^{+})|-\big||f|(x_{r,n})-|f|(x_{r,n}^{+})\big|+6\varepsilon,
\end{align*}
since the only terms that contributes to $\var(f,\widetilde{\mathcal{P}})-\var(|f|,\widetilde{\mathcal{P}})$ are those corresponding to the pairs $\{\widetilde{x}_{l,n},x_{l,n}\},$ $\{x_{r,n},\widetilde{x}_{r,n}\},$ $\{x_k,z_k^{\circ}\},$ $\{z_k^{\circ},y_k\}$ and $\{z_k,\widetilde{z_k}\}.$ Letting $N\to \infty$ and $\varepsilon \to 0,$ we obtain the claim. 
\end{proof}
Now, we use Lemma \ref{modulusformula} to show that the map $f \mapsto \var_{(a,b)}(|f|)$ is continuous from $BV(\mathbb{R})$ to $[0,\infty)$.
\begin{lemma}\label{variationconvergence}
Fix $f\in BV(\mathbb{R})$ and let $\{f_j;j\in \mathbb{N}\} \subset BV(\mathbb{R})$ be such that $\underset {j\to \infty}{\lim} \|f_j-f\|_{BV}=0.$ Then for any $-\infty\le a<b\le \infty$ we have 
$$\lim_{j\to \infty} \var_{(a,b)}(|f_j|)=\var_{(a,b)}(|f|).$$
\end{lemma}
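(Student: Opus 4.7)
The plan is to apply Lemma \ref{modulusformula} to every $g \in BV(\mathbb{R})$, obtaining the decomposition
$$
\var_{(a,b)}(|g|) = \var_{(a,b)}(g) - J(g),
$$
where $J(g) \geq 0$ denotes the right-hand side of the identity in that lemma, regarded as a sum of the nonnegative jump corrections $|g(x) - g(x^\pm)| - \big| |g|(x) - |g|(x^\pm) \big|$ ranging over $x$ in $D_l(g) \cap (a,b)$ and $D_r(g) \cap (a,b)$ (equivalently, over all $x \in (a,b)$, since each summand vanishes at points of one-sided continuity). The proof then reduces to establishing $\var_{(a,b)}(f_j) \to \var_{(a,b)}(f)$ together with the two one-sided bounds $\limsup_j J(f_j) \leq J(f)$ and $\liminf_j J(f_j) \geq J(f)$.

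The convergence $\var_{(a,b)}(f_j) \to \var_{(a,b)}(f)$ is immediate from subadditivity, since
$$
|\var_{(a,b)}(f_j) - \var_{(a,b)}(f)| \leq \var_{(a,b)}(f_j - f) \leq \var_{\mathbb{R}}(f_j - f) \leq \|f_j - f\|_{BV} \to 0.
$$
The bound $\limsup_j J(f_j) \leq J(f)$ is equivalent to the lower semicontinuity $\liminf_j \var_{(a,b)}(|f_j|) \geq \var_{(a,b)}(|f|)$, which I would derive from Lemma \ref{uniformconvergence}: the uniform convergence $f_j \to f$ on $\mathbb{R}$ implies $|f_j| \to |f|$ uniformly, hence $\var(|f_j|, \mathcal{P}) \to \var(|f|, \mathcal{P})$ for every fixed finite partition $\mathcal{P}$ of $(a,b)$, and taking a supremum over $\mathcal{P}$ delivers the claim.

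The main obstacle is the complementary inequality $\liminf_j J(f_j) \geq J(f)$. Given $\varepsilon > 0$, I would select finitely many points $x_1, \ldots, x_N$ in $(D_l(f) \cup D_r(f)) \cap (a,b)$ whose associated summands in $J(f)$ already account for at least $J(f) - \varepsilon$. The essential technical step is that uniform convergence of $f_j$ to $f$ on $\mathbb{R}$ transfers to convergence of one-sided limits $f_j(x_n^\pm) \to f(x_n^\pm)$ at each fixed $x_n$: if $\delta > 0$ is chosen so that $|f(y) - f(x_n^-)| < \eta$ for $y \in (x_n - \delta, x_n)$ and $\|f_j - f\|_\infty < \eta$, then $|f_j(y) - f(x_n^-)| < 2\eta$ on the same interval, and passing $y \uparrow x_n$ inside $f_j$ yields $|f_j(x_n^-) - f(x_n^-)| \leq 2\eta$. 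By continuity of the absolute value, the summand of $J(f_j)$ corresponding to each prescribed $x_n$ therefore converges to the corresponding summand of $J(f)$; since every other summand of $J(f_j)$ is nonnegative, one obtains $\liminf_j J(f_j) \geq J(f) - \varepsilon$, and letting $\varepsilon \to 0$ finishes the argument.
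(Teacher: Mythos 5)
Your proof is correct and follows essentially the same route as the paper: both reduce the problem via Lemma \ref{modulusformula} to the convergence of the jump sums $J(f_j)\to J(f)$, get $\var_{(a,b)}(f_j)\to\var_{(a,b)}(f)$ from subadditivity of the variation, and use the uniform convergence of Lemma \ref{uniformconvergence} to pass the one-sided limits $f_j(x^{\pm})\to f(x^{\pm})$ at finitely many jump points of $f$ carrying all but $\varepsilon$ of $J(f)$. The only organizational difference is in the inequality $\limsup_j J(f_j)\le J(f)$: you derive it from lower semicontinuity of $\var_{(a,b)}(|\cdot|)$ under uniform convergence, whereas the paper bounds directly the contribution to $J(f_j)$ from jump points outside the selected finite set by $\sum|f(x)-f(x^{-})|+4\|f-f_j\|_{BV}$ over those points; both steps are elementary and valid.
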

\begin{proof}
It is possible to verify that $f_j\to f$ implies $\var_{(a,b)}(f_j)\to \var_{(a,b)}(f).$ Thus, it remains to show 
$$\lim_{j\to \infty} \var_{(a,b)}(f_j)-\var_{(a,b)}(|f_j|)=\var_{(a,b)}(f)-\var_{(a,b)}(|f|).$$ 
We define $\{x_{l,n};n\in \mathbb{N}\}$ and $\{x_{r,n};n\in \mathbb{N}\}$ as in the previous lemma. Given $\varepsilon>0$ we choose $N\in \mathbb{N}$ such that $$\sum_{n=N+1}^{\infty}|f(x_{l,n})-f(x_{l,n}^{-})|+|f(x_{r,n})-f(x_{r,n}^{+})|<\varepsilon.$$ 
We also denote $D_{l}^{j,N}:=D_{l}(f_j)\cap \{x_{l,1},\dots,x_{l,N}\}$ and $D_{r}^{j,N}:=D_{r}(f_j)\cap \{x_{r,1},\dots, x_{r,N}\}.$
By Lemma \ref{uniformconvergence} we have that for $j$ big enough
$$\Big|\Big(\sum_{D_{l}^{j,N}} |f_{j}(x)-f_{j}(x^{-})|-\big||f_j|(x)-|f_j|(x^{-})\big|-|f(x)-f(x^{-})|+\big||f|(x)-|f|(x^{-})\big| \Big)\Big|<\varepsilon$$ and
$$\Big|\Big(\sum_{D_{r}^{j,N}} |f_{j}(x)-f_{j}(x^{+})|-\big||f_j|(x)-|f_j|(x^{+})\big|-|f(x)-f(x^{+})|+\big||f|(x)-|f|(x^{+})\big|\Big)\Big|<\varepsilon.$$
Moreover, we have 
\begin{align*}
    0 & \leq \underset{_{x\in \big(D_{l}(f_j)\cap (a,b)\big )\setminus D_{l}^{j,N}}}{\sum}|f_{j}(x)-f_{j}(x^{-})|-\big||f_j|(x)-|f_{j}|(x^{-})\big| \\
    & \le \underset{_{x\in \big(D_{l}(f_j)\cap (a,b)\big )\setminus D_{l}^{j,N}}}{\sum}|f_{j}(x)-f_{j}(x^{-})|\\
    &\le \underset{_{x\in \big(D_{l}(f_j)\cap (a,b)\big )\setminus D_{l}^{j,N}}}{\sum}|f(x)-f(x^{-})|+4 \|f-f_j\|_{BV}
    <2\varepsilon
\end{align*}
and, similarly, 
\begin{align*}
 0 \leq \underset{_{x\in \big(D_{r}(f_j) \cap (a,b)\big )\setminus D_{r}^{j,N}}}{\sum}|f_{j}(x)-f_{j}(x^{+})|-\big||f_j|(x)-|f_{j}|(x^{+})\big|<2\varepsilon.
\end{align*}
Finally, we observe that $\{x_{l,1},\dots,x_{l,N}\}\subset D_{l}(f_j)$ and $\{x_{r,1},\dots, x_{r,N}\}\subset D_{r}(f_j)$ for $j$ big enough, by the uniform convergence. Letting $\varepsilon \to 0$ (and thus $N\to \infty$) and applying Lemma \ref{modulusformula}, we obtain the claim.
\end{proof}
 
Let us now take a closer look at the properties of the maximal operator. Recall that the total variation of $\widetilde{M}f$ can be controlled by the total variation of $f$. There is also a local version of this principle, where we focus on an interval $(a,b)$. However in this case some boundary terms must be included. Thus, to avoid the possibility that $f$ behaves badly at $a$ or $b$, we use its adjusted version $\overline{|f|}$ defined by $$\overline{|f|}(x):=\underset{I\ni x; m(I)\to 0}{\lim \sup}\intav{I}|f|.$$ 
It is known that $\overline{|f|}$ is upper semicontinuous and that $\overline{|f|}\le \widetilde{M}f$ (see \cite[Lemma 3.3]{AP2007}).

\begin{lemma}\label{aldazperezlazaro}
Fix $f\in BV(\mathbb{R}).$ Given $-\infty\le a<b\le \infty,$ we have
$$\var_{(a,b)}\big (\widetilde{M}f\big)\le \var_{(a,b)}\big(\overline{|f| }\big)+\big|\widetilde{M}f(a)-\overline{|f|}(a)\big|+\big|\widetilde{M}f(b)-\overline{|f|}(b)\big|.$$
\end{lemma}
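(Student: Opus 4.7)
My plan is to follow the strategy of Aldaz and Perez-Lazaro \cite{AP2007} for the global inequality $\var(\widetilde{M}f)\leq \var(\overline{|f|})$, but localized to $(a,b)$; the boundary terms will appear precisely as the local analogues of contributions that globally vanish at $\pm \infty$.

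First I would reduce to the case $f \geq 0$, which costs nothing since $\widetilde{M}f = \widetilde{M}|f|$ and the right-hand side already depends only on $|f|$. Next, consider the set $E := \{x \in \mathbb{R} : \widetilde{M}f(x) > \overline{f}(x)\}$. Because $\widetilde{M}f$ is continuous (in fact absolutely continuous by \cite[Theorem 2.5]{AP2007}) while $\overline{f}$ is upper semicontinuous, the set $E$ is open and decomposes into at most countably many components $(\alpha_j,\beta_j)$.

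The key structural fact I would establish is that on each component $(\alpha_j, \beta_j)$, the function $\widetilde{M}f$ admits no strict local maximum. Indeed, if $x_0 \in (\alpha_j, \beta_j)$, then $\widetilde{M}f(x_0) > \overline{f}(x_0)$ rules out maximizing intervals shrinking to $x_0$, so a short compactness argument produces a non-degenerate interval $I_0 \ni x_0$ with $\intav{I_0}f = \widetilde{M}f(x_0)$; then for any $y \in I_0$ we have $\widetilde{M}f(y) \geq \intav{I_0}f = \widetilde{M}f(x_0)$, ruling out a strict local max at $x_0$. From this, a standard continuity argument yields that the variation of $\widetilde{M}f$ on each such $(\alpha_j, \beta_j)$ reduces to the ``one-way trip'' $|\widetilde{M}f(\alpha_j^+) - \widetilde{M}f(\beta_j^-)|$, which at any component endpoint lying strictly inside $(a,b)$ equals $|\overline{f}(\alpha_j) - \overline{f}(\beta_j)|$ by continuity of $\widetilde{M}f$ and the definition of $E$.

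Finally, I would fix an arbitrary partition $a < a_0 < \dots < a_K < b$ and enlarge it by inserting those endpoints $\alpha_j, \beta_j$ that lie in $(a,b)$. On this enlarged partition, intervals whose endpoints lie in $E^c$ contribute a quantity bounded by $\var_{(a,b)}(\overline{f})$ (since $\widetilde{M}f = \overline{f}$ there), and contributions from components of $E$ whose closure lies inside $(a,b)$ telescope into differences of $\overline{f}$-values that are also absorbed into $\var_{(a,b)}(\overline{f})$. The only pieces not absorbed in this way are the (at most) two components of $E$ that ``leak'' out of $(a,b)$ through $a$ or $b$; these yield exactly the defects $|\widetilde{M}f(a) - \overline{f}(a)|$ and $|\widetilde{M}f(b) - \overline{f}(b)|$, after which taking the supremum over all partitions gives the claim. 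I expect the main technical obstacle to be the rigorous passage from ``no strict local maximum on $(\alpha_j,\beta_j)$'' to the telescoping variation formula on each component, together with a careful treatment of the cases $a = -\infty$ or $b = \infty$, where Lemma \ref{infty} and suitable exhaustion are needed so that both $\widetilde{M}f$ and $\overline{f}$ have well-defined limits that can play the role of endpoint values.
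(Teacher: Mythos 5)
Your proposal is correct and takes essentially the same route as the paper, whose proof consists of the single remark that the statement follows by a slight modification of \cite[Lemma 3.9]{AP2007} --- precisely the localization you carry out (no strict local maxima of $\widetilde{M}f$ on the components of $E$, telescoping over components contained in $(a,b)$, and boundary defects for the at most two components leaking through $a$ or $b$). One small point to patch in your compactness step: besides ruling out maximizing intervals shrinking to a point, you must also handle the case where their lengths tend to infinity, but then $\widetilde{M}f(x_0)=\widetilde{M}f(\infty)$ is the global minimum value of $\widetilde{M}f$, so $x_0$ is still not a strict local maximum and the structural fact survives.
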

\begin{proof}
This follows by a slight modification of the proof of \cite[Lemma 3.9]{AP2007}.
\end{proof}

The next result gives us the uniform control (with respect to $j$) on the behavior of $\big (\widetilde{M}f_j\big)'$ near infinity, provided that $\{ f_j\}_{j \in \mathbb{N}}$ is a converging sequence in $BV(\mathbb{R})$. This, in turn, allows one to restrict the attention to a bounded interval, while dealing with the total variations of the maximal functions $\widetilde{M}f_j$. We point out that it is also possible to proceed without this reduction, but then for all considered functions the extended domain $[-\infty, \infty]$ should be used instead of $\mathbb{R}$.   

\begin{lemma}\label{controlatinfty}
Fix $f\in BV(\mathbb{R})$ and let $\{f_j;j\in \mathbb{N}\}\subset BV(\mathbb{R})$ be such that $\underset{j\to \infty}{\lim}\|f_j-f\|_{BV}=0.$ Then for any $\varepsilon>0$ there exist $-\infty<a<b<\infty$ such that $$\int_{\mathbb{R} \setminus (a,b)}\left|\big (\widetilde{M}f\big)'\right|<\varepsilon$$ and
$$\int_{\mathbb{R} \setminus (a,b)}\left|\big (\widetilde{M}f_j\big)'\right|<\varepsilon,$$
for every $j$ big enough.  
\end{lemma}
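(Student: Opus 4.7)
I will handle the tails at $+\infty$ and $-\infty$ separately; only the $+\infty$ side is described, the other being symmetric. Set $L := |f|(-\infty)$, $R := |f|(\infty)$, $c := \max(L,R)$, and let $L_j, R_j, c_j$ denote the corresponding quantities for $f_j$. By Lemma \ref{uniformconvergence} applied in the limit $x \to \pm\infty$ we obtain $L_j \to L$, $R_j \to R$, hence $c_j \to c$; by Lemma \ref{infty}, $\widetilde{M}f(\pm\infty) = c$ and $\widetilde{M}f_j(\pm\infty) = c_j$. Since by \cite{AP2007} both $\widetilde{M}f$ and each $\widetilde{M}f_j$ are absolutely continuous, the integrals appearing in the statement coincide with the corresponding total variations, and the task reduces to bounding $\var_{(b,\infty)}(\widetilde{M}f_j)$ uniformly in $j$ for a suitable $b$. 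The argument splits into two cases depending on whether $L \le R$ or $L > R$.

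\textbf{Case $L \le R$.} Here $c = R = \overline{|f|}(\infty)$, so the boundary term at $+\infty$ in Lemma \ref{aldazperezlazaro} vanishes and yields
\[
\var_{(b,\infty)}\big(\widetilde{M}f\big) \le \var_{(b,\infty)}\big(\overline{|f|}\big) + \big|\widetilde{M}f(b) - \overline{|f|}(b)\big|,
\]
with both terms tending to $0$ as $b \to \infty$ (the first because $\var(|f|) < \infty$ and $\var(\overline{|f|}) \le \var(|f|)$, the second by Lemma \ref{infty}). For $f_j$ the same inequality holds with an extra boundary contribution $|c_j - R_j| = \max(L_j - R_j, 0)$, which tends to $0$ regardless of whether $L < R$ or $L = R$. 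Choosing $b$ to be a continuity point of $|f|$, Lemma \ref{uniformconvergence} passes control from $|\widetilde{M}f(b) - \overline{|f|}(b)|$ to the analogous quantity for $f_j$, while Lemma \ref{variationconvergence} provides $\var_{(b,\infty)}(|f_j|) \to \var_{(b,\infty)}(|f|)$ and hence control of $\var_{(b,\infty)}(\overline{|f_j|})$. Fixing $b$ large and then $j$ large gives the bound.

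\textbf{Case $L > R$.} Here Lemma \ref{aldazperezlazaro} is insufficient: its $+\infty$-boundary contribution equals $c - R = L - R > 0$ regardless of $b$. Instead, I prove that $\widetilde{M}f_j$ is eventually monotone on the right, uniformly in $j$. Pick $x_0$ large enough that $\sup_{y \ge x_0}|f|(y) < (L+R)/2$; by Lemma \ref{uniformconvergence} and $L_j \to L$, for all sufficiently large $j$ one simultaneously has $\sup_{y \ge x_0}|f_j|(y) < (L+R)/2 < L_j$. Now for $x \ge x_0$ and $\delta > 0$, any interval $I \ni x + \delta$ whose average $\intav{I}|f_j|$ is close enough to $\widetilde{M}f_j(x+\delta) \ge c_j = L_j$ satisfies $\intav{I}|f_j| > \sup_{y\ge x_0}|f_j|(y)$, which forces $I$ to contain a point $\le x_0$; since $I$ is an interval also containing $x+\delta > x \ge x_0$, it must contain $x$. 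Taking the supremum over such $I$ yields $\widetilde{M}f_j(x) \ge \widetilde{M}f_j(x+\delta)$, so $\widetilde{M}f_j$ is monotone decreasing on $[x_0,\infty)$ and hence
\[
\int_{x_0}^{\infty}\big|(\widetilde{M}f_j)'\big| = \widetilde{M}f_j(x_0) - c_j.
\]
Enlarging $x_0$ so that $\widetilde{M}f(x_0) - L < \varepsilon/4$ (possible by Lemma \ref{infty}), Lemma \ref{uniformconvergence} and $c_j \to L$ make the right-hand side less than $\varepsilon$ for $j$ large. Combining with the symmetric argument at $-\infty$ produces the desired $a < b$. The main obstacle is precisely this case: Lemma \ref{aldazperezlazaro} loses information at the saturated boundary, and one must exploit the structural fact that when the opposite end dominates, every near-optimal averaging interval for $\widetilde{M}f_j$ is forced to stretch past $x_0$, which is exactly what produces the eventual monotonicity.
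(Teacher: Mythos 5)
Your proof is correct and follows essentially the same route as the paper's: the same dichotomy (whether the limit of $|f|$ at the far end dominates), with the forcing/eventual-monotonicity argument when $L>R$ and Lemma \ref{aldazperezlazaro} combined with Lemmas \ref{uniformconvergence} and \ref{variationconvergence} when $L\le R$. The paper phrases the monotonicity case via near-optimal intervals being forced to contain earlier points, which is exactly your mechanism.
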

\begin{proof}
We prove that there exists $b<\infty$ such that 
$\displaystyle\int_{(b,\infty)}\big|\big (\widetilde{M}f\big)'\big|<\varepsilon,$ the symmetric case is treated analogously. 
First we deal with the case where $\widetilde{M}f(\infty)>|f|(\infty)$. Assume that $\widetilde{M}f(\infty)-|f|(\infty)>4\varepsilon$. Let us take $b$ big enough such that (we use here Lemma \ref{uniformconvergence}) we have $\big|\widetilde{M}f(x)-\widetilde{M}f(\infty)\big|<\varepsilon$ and $\big||f|(x)-|f|(\infty)\big|< \varepsilon$ for every $x\in (b,\infty).$  Therefore, for $j$ big enough such that $\||f_j|-|f|\|_{\infty}\le \frac{\varepsilon}{2}$ and $\|\widetilde{M}f_j-\widetilde{M}f\|_{\infty}\le \frac{\varepsilon}{2},$ for each $y \in (b,x)$ we have $\widetilde{M}f_{j}(y)\ge \widetilde{M}f_{j}(x).$ This is the case because any interval $I\ni x$ satisfying $\intav{I}|f_j|>\widetilde{M}f_j(x)-\frac{\varepsilon}{2}$ contains $y,$ since if $I\subset (y,\infty)$, in particular $I\subset (b,\infty),$ and then $$\intav{I}|f_j|\le \intav
{I}|f|+\frac{\varepsilon}{2}\le |f|(\infty)+\frac{3\varepsilon}{2}\le \widetilde{M}f(\infty)-2\varepsilon\le \widetilde{M}f(x)-\varepsilon\le \widetilde{M}f_j(x)-\frac{\varepsilon}{2}.$$ Therefore $$\int_{(b,\infty)}\left|\big (\widetilde{M}f_j\big)'\right|=\widetilde{M}f_j(b)-\widetilde{M}f_j(\infty)\le \big|\widetilde{M}f(b)-\widetilde{M}f_j(b)\big|+\big|\widetilde{M}f_j(\infty)-\widetilde{M}f(\infty)\big|+\big|\widetilde{M}f(b)-\widetilde{M}f(\infty)\big|\le 3\varepsilon$$ for $j$ big enough, from where we conclude this case.  

Now, we deal with the case where $|f|(\infty)=\widetilde{M}f(\infty).$ 
By Lemma \ref{aldazperezlazaro} and \cite[Lemma 3.3]{AP2007}, assuming that $b$ is a continuity point for $f_j$, we obtain \begin{align*}
\int_{(b,\infty)}\Big|\big (\widetilde{M}f_j\big)'\Big|&\le \var_{(b,\infty)}\big(\overline{|f_j|}\big)+\big|\widetilde{M}f_j(b)-\overline{|f_j|}(b)\big|+\big|\widetilde{M}f_j(\infty)-\overline{|f_j|}(\infty)\big|\\
&\le \var_{(b,\infty)}(|f_j|)+\big|\widetilde{M}f_j(b)-|f_j|(b)\big|+\big|\widetilde{M}f_j(\infty)-|f_j|(\infty)\big|.
\end{align*}
The analogous is obtained for $f$ instead of $f_j$. Let us assume that $b$ is a continuity point for $f$ and every $f_j$, such that $\var_{(b,\infty)}\big(|f|\big)<\varepsilon.$ By Lemma \ref{variationconvergence} we have \begin{align}\label{2varepsilon}
\var_{(b,\infty)}(|f_j|)<2\varepsilon, 
\end{align}
for $j$ big enough. Also, $$\big|\widetilde{M}f_j(b)-|f_j|(b)\big|\le \big|\widetilde{M}f(b)-|f|(b)\big|+|\widetilde{M}f_j(b)-\widetilde{M}f(b)|+\big||f_j|(b)-|f|(b)\big|.$$ If $b$ is big enough to have $\big|\widetilde{M}f(b)-|f|(b)\big|< \varepsilon$, then by Lemma \ref{uniformconvergence} we get $\big|\widetilde{M}f_j(b)-|f_j|(b)\big|< 2 \varepsilon$ and $\big|\widetilde{M}f_j(\infty)-|f_j|(\infty)\big|<\varepsilon$ for $j$ big enough. Combining this with \eqref{2varepsilon} concludes the proof. 
\end{proof}

Before we prove our key result regarding the variation of maximal functions, we need the following definition. A given partition $\mathcal{P}=\{a_0<a_1 < \dots <a_n\},$ with $n\ge 2,$ has property (V) with respect to $f$ if for each $i\in \{0,1,\dots, n-2\},$ we have $\sgn(f(a_{i+2})-f(a_{i+1}))\cdot \sgn(f(a_{i+1})-f(a_{i}))<0$. 
\begin{proposition}\label{keyproposition}
Fix $f\in BV(\mathbb{R})$ and let $\{f_j;j\in \mathbb{N}\}\subset BV(\mathbb{R})$ be such that $\underset{j\to \infty}{\lim}\|f_j-f\|_{BV}=0.$ Then 
$$\var_{(-\infty, \infty)} \big (\widetilde{M}f_j\big)\to \var_{(-\infty, \infty)}\big (\widetilde{M}f\big).$$
\end{proposition}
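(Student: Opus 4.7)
My plan is to establish the two inequalities $\var(\widetilde{M}f) \le \liminf_{j \to \infty} \var(\widetilde{M}f_j)$ and $\limsup_{j \to \infty} \var(\widetilde{M}f_j) \le \var(\widetilde{M}f)$ separately. As a preliminary reduction, for any fixed $\varepsilon > 0$ I would invoke Lemma \ref{controlatinfty} to pick $-\infty < a < b < \infty$, which may be taken to be continuity points of $f$ and of $f_j$ for all large $j$, so that the variations of $\widetilde{M}f$ and of $\widetilde{M}f_j$ outside $(a,b)$ are each at most $\varepsilon$. It then suffices to compare $\var_{(a,b)}(\widetilde{M}f_j)$ with $\var_{(a,b)}(\widetilde{M}f)$.

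The lower bound follows at once from the uniform convergence $\widetilde{M}f_j \to \widetilde{M}f$ provided by Lemma \ref{uniformconvergence}: for any finite partition $\mathcal{P} \subset (a,b)$ one has $\var(\widetilde{M}f, \mathcal{P}) = \lim_j \var(\widetilde{M}f_j, \mathcal{P}) \le \liminf_j \var_{(a,b)}(\widetilde{M}f_j)$, and taking the supremum over $\mathcal{P}$ yields the desired estimate.

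The upper bound is the heart of the matter. A naive approach chaining Lemma \ref{aldazperezlazaro} with $\var_{(a,b)}(\overline{|f_j|}) \le \var_{(a,b)}(|f_j|)$ and Lemma \ref{variationconvergence} only delivers $\limsup_j \var_{(a,b)}(\widetilde{M}f_j) \le \var_{(a,b)}(|f|) + (\text{boundary terms})$, which is too crude since the uncentered maximal operator is in general strictly variation-diminishing. To sharpen the argument I would, for each large $j$, pick a partition $\mathcal{P}_j \subset (a,b)$ with property (V) for $\widetilde{M}f_j$ satisfying $\var(\widetilde{M}f_j, \mathcal{P}_j) > \var_{(a,b)}(\widetilde{M}f_j) - \varepsilon$, retain only the points at which the associated consecutive jump of $\widetilde{M}f_j$ exceeds a threshold $\delta$, and pass to a subsequence so that these finitely many points converge to the nodes of a common partition $\mathcal{P}^{\star} \subset (a,b)$. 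By uniform convergence, the resulting \emph{large-jump} contribution transfers in the limit to $\var(\widetilde{M}f, \mathcal{P}^{\star}) \le \var_{(a,b)}(\widetilde{M}f)$, whereas the residual \emph{small-jump} contribution on each component $J$ of $(a,b) \setminus \mathcal{P}^{\star}$ is bounded by applying Lemma \ref{aldazperezlazaro} locally on $J$ and then passing to the limit via Lemma \ref{variationconvergence} applied to $|f_j|$ on $J$; suitable choices of $\delta$ and of $\mathcal{P}^{\star}$ then make the total residual at most $O(\varepsilon)$.

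The main obstacle I anticipate lies precisely in taming the small-jump contributions: a priori $\mathcal{P}_j$ may have unboundedly many nodes, and the collective mass of jumps below the threshold $\delta$ need not tend to zero with $j$. What saves the argument is the structural principle behind Lemma \ref{aldazperezlazaro} and \cite[Lemma 3.9]{AP2007}, namely that interior local maxima of $\widetilde{M}f_j$ occur only where $\widetilde{M}f_j$ coincides with $\overline{|f_j|}$; so the extra oscillations of $\widetilde{M}f_j$ that are absent from $\widetilde{M}f$ are inherited from oscillations of $|f_j|$ which disappear in the limit $f$, and it is exactly this phenomenon that Lemma \ref{variationconvergence} quantifies on each subinterval $J$.
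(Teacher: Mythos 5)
Your reduction to a bounded interval via Lemma \ref{controlatinfty} and your argument for the lower bound coincide with the paper's; the genuine gap is in the upper bound, in the treatment of the small-jump residual. Two concrete problems arise there. First, when you apply Lemma \ref{aldazperezlazaro} on a component $J=(c,d)$ of $(a,b)\setminus\mathcal{P}^{\star}$, the boundary terms $\big|\widetilde{M}f_j(c)-\overline{|f_j|}(c)\big|$ and $\big|\widetilde{M}f_j(d)-\overline{|f_j|}(d)\big|$ need not be small: on the set where $\widetilde{M}f>\overline{|f|}$ they are bounded away from zero, and the endpoints of $J$ are dictated by where the large jumps of $\widetilde{M}f_j$ accumulate, so you cannot arrange for them to avoid that set. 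Second, and more fundamentally, what Lemma \ref{aldazperezlazaro} gives you on $J$ is a bound by $\var_J\big(\overline{|f_j|}\big)\le \var_J(|f_j|)$, and Lemma \ref{variationconvergence} only says that this converges to $\var_J(|f|)$ --- a quantity that is not small and whose sum over the components $J$ is essentially $\var_{(a,b)}(|f|)$. Adding this to the large-jump contribution $\var\big(\widetilde{M}f,\mathcal{P}^{\star}\big)$ returns you to an estimate of the form $\var_{(a,b)}\big(\widetilde{M}f\big)+\var_{(a,b)}(|f|)$, i.e.\ precisely the ``too crude'' bound you correctly rejected at the outset; no choice of $\delta$ or of $\mathcal{P}^{\star}$ repairs this, since the sum of $\var_J(|f|)$ over components is partition-independent. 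Nor can you hope that the small jumps themselves sum to $O(\varepsilon)$: already for $f_j=f$, the function $\widetilde{M}f$ may carry a fixed positive amount of variation in oscillations each of size below any prescribed threshold $\delta$, so the sub-threshold mass of a near-optimal alternating partition need not vanish.

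Your closing intuition --- that the extra oscillation of $\widetilde{M}f_j$ is inherited from oscillation of $|f_j|$ that disappears in the limit --- is exactly the right one, but it must be implemented by comparing the oscillation of $\widetilde{M}f_j$ with a \emph{defect} of variation of $|f_j|$, not with $\var_J(|f_j|)$ itself. This is what the paper does: it fixes one coarse partition $\mathcal{P}$ that nearly exhausts both $\var_{(a,b)}(|f|)$ and $\var_{(a,b)}\big(\widetilde{M}f\big)$ (hence, by Lemma \ref{variationconvergence} and uniform convergence, nearly exhausts $\var_{(a,b)}(|f_j|)$ for large $j$), refines it to an alternating partition $\widetilde{\mathcal{P}}$ realizing $\var_{(a,b)}\big(\widetilde{M}f_j\big)$, and then builds a ``shadow'' partition by choosing, near each interior node of the refinement, a point where $|f_j|$ lies above $\widetilde{M}f_j$ minus a small error (at local maxima, via an interval whose average of $|f_j|$ is close to $\widetilde{M}f_j$) or below $\widetilde{M}f_j$ (at local minima). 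The alternating sums then show that the excess $\var\big(\widetilde{M}f_j,\widetilde{\mathcal{P}}\big)-\var\big(\widetilde{M}f_j,\widetilde{\widetilde{\mathcal{P}}}\big)$ is dominated by $\var_{(a,b)}(|f_j|)-\var(|f_j|,\mathcal{P})$, which is $O(\varepsilon)$. That quantitative comparison is the idea missing from your proposal.
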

\begin{proof}
By Lemma \ref{controlatinfty} it is enough to prove that $\var_{(a,b)}\big (\widetilde{M}f_j\big)\to \var_{(a,b)}\big (\widetilde{M}f\big)$ for every interval $(a,b) \subset \mathbb{R}$ with both $a$ and $b$ being points of continuity for $f$ and every $f_j$. In the following we fix $-\infty<a<b<\infty$ satisfying such assumption.
Observe that Lemma \ref{uniformconvergence} and Fatou's lemma imply
$$\underset{j\to \infty}{\lim \inf}\ \var_{(a,b)}\big (\widetilde{M}f_j\big)\ge \var_{(a,b)}\big (\widetilde{M}f\big).$$
Now, we prove the remaining inequality, that is, 
$$\underset{j\to \infty}{\lim \sup}\ \var_{(a,b)}\big (\widetilde{M}f_j\big)\le \var_{(a,b)}\big (\widetilde{M}f\big).$$
Given $\varepsilon>0$ we show that $$\var_{(a,b)}\big (\widetilde{M}f_j\big)<\var_{(a,b)}\big (\widetilde{M}f\big)+4\varepsilon$$ holds if $j$ is big enough. 
Let $\mathcal{P}=\{a=a_0<a_1<\dots<a_K=b\}\subset \mathbb{R},$ $K\in \mathbb{N},$ be a partition satisfying 
\begin{align*}
 \var\big(|f|,\mathcal{P}\big)>\var_{(a,b)}\big(|f|\big)-\varepsilon  
\end{align*}
and 
\begin{align*}
 \var\big(\widetilde{M}f,\mathcal{P}\big)>\var_{(a,b)}\big (\widetilde{M}f\big)-\varepsilon. \end{align*}
Also, by the uniform convergence and Lemma \ref{variationconvergence} we conclude that 
\begin{align}\label{aproximation1}
    \var\big(|f_j|,\mathcal{P}\big)>\var_{(a,b)}\big(|f_j|\big)-2 \varepsilon
\end{align}
and \begin{align}\label{aproximation2}
\var\big(\widetilde{M}f_j,\mathcal{P}\big)>\var_{(a,b)}\big (\widetilde{M}f\big)-2 \varepsilon
\end{align}
hold for $j$ big enough. Now, we take $\widetilde{\mathcal{P}}=\widetilde{\mathcal{P}}(j)$ such that $\mathcal{P}\subset \widetilde{\mathcal{P}}\subset [a,b]$ and 
$$\var\big(\widetilde{M}f_{j},\widetilde{\mathcal{P}}\big)>\var_{(a,b)}\big (\widetilde{M}f_j\big)-\varepsilon.$$
Without loss of generality we can assume that $\widetilde{\mathcal{P}}$ is such that for each $i\in \{1,\dots,K\}$ the set $\widetilde{\mathcal{P}}\cap [a_{i-1},a_i]=\{a_{i-1}=a_{i,0}<\dots<a_{i,n_i}=a_i\}$ satisfies property (V) with respect to $\widetilde{M}f_{j}$ unless it consists of two elements. 
For each such $i$ we denote $\widetilde{\mathcal{P}}_{i}=\{a_{i,1},\dots,a_{i,n_{i}-1}\}$ and claim that it is possible to find a partition $\widetilde{\mathcal{P}}_{i}^{*}=\{a_{i,1}^{*},\dots,a_{i,n_i-1}^{*}\}\subset (a_{i-1},a_i)$ such that
\begin{align*}
\var\Big(|f_j|, \widetilde{\mathcal{P}}_{i}^{*}\Big)-\var\Big(|f_j|,\{a_{i,1}^*,a_{i,n_i-1}^{*}\}\Big)>\var\left(\widetilde{M}f_{j}, \widetilde{\mathcal{P}}_{i}\right)-\var\left(\widetilde{M}f_j,\{a_{i,1},a_{i,n_{i}-1}\}\right)-\frac{\varepsilon}{K}.    
\end{align*}
Indeed, for $n_i \leq 2$ we use the convention that all the variation terms above are equal to $0,$ so the inequality holds (we set $\widetilde{\mathcal{P}}_{i}^{*} = \emptyset$ or $\widetilde{\mathcal{P}}_{i}^{*} = \{ a_{i,1}\}$ if $n=1$ or $n=2$, respectively). It remains to consider the case $n_i\ge 3$ in which property $(V)$ is guaranteed. We assume that $\widetilde{M}f_{j}(a_{i,0})<\widetilde{M}f_{j}(a_{i,1})$ (the opposite case can be treated analogously). Then $\widetilde{\mathcal{P}}_{i}^{*}$ shall be chosen in such a way that given $k\in \{1,\dots, n_{i}-1\}$ we have 
$$|f_{j}|(a_{i,k}^{*}) >\max \Big\{\widetilde{M}f_{j}(a_{i,k-1}),\widetilde{M}f_j(a_{i,k})-\frac{\varepsilon}{2n_iK}, \widetilde{M}f_j(a_{i,k+1})\Big \},$$ 
for $k$ odd, and 
$$|f_j|(a_{i,k}^{*})\le \widetilde{M}f_{j}(a_{i,k})$$
for $k$ even.
We describe in detail the procedure for selecting the points $a_{i,k}^{*}.$ If $k$ is odd, then we find an interval $I\ni a_{i,k}$ such that 
$$
\intav{I}|f_j|>\max \Big\{\widetilde{M}f_{j}(a_{i,k-1}),\widetilde{M}f_j(a_{i,k})-\frac{\varepsilon}{2n_iK}, \widetilde{M}f_j(a_{i,k+1})\Big \}.
$$
Clearly, $I\subset (a_{i,k-1},a_{i,k+1})$ and we can find $a_{i,k}^{*}\in I$ satisfying $|f_{j}|(a_{i,k}^{*}) \ge \intav{I}|f_j|.$ For $k$ even we take $I=(a_{i,k-1}^{*},a_{i,k+1}^{*})$ if $k \neq n_i-1$ or $I=(a_{i,n_{i}-2}^{*},a_i)$ otherwise. Since
$\intav{I}|f_j|\le \widetilde{M}f_{j}(a_{i,k}),$
there exists $a_{i,k}^{*}\in I$ satisfying $|f_{j}|(a_{i,k}^{*})\le \widetilde{M}f_j(a_{i,k}).$ We note that the appropriate configuration of the sets $I$ guarantees that the inequalities $a_{i-1}<a_{i,1}^*<\dots<a_{i,n_{i}-1}^*<a_{i}$ hold. 

Observe, also, that the partition $\{a_{i,1}^* < \dots < a_{i,n_i-1}^*\}$ either consists of $2$ elements or satisfies property (V) with respect to $|f_j|$. Thus, regardless of which case occurs, we obtain
$$\var\Big(|f_j|, \widetilde{\mathcal{P}}_{i}^{*}\Big)-\var\Big(|f_j|,\{a_{i,1}^{*},a_{i,n_{i}-1}^{*}\}\Big)=\sum_{k=1}^{n_{i}-1}\alpha_{k}|f_{j}|(a_{i,k}^{*}),$$
where $\alpha_{k}=2(-1)^{k+1}$ for $k\in \{2,\dots, n_{i}-2\}$ and $\alpha_k\in \left \{0,2(-1)^{k+1}\right \}$ for $k\in \{1,n_{i}-1\}$ (the boundary values depend on $\sgn\big(|f_{j}|(a_{i,1}^{*})-|f_j|(a_{i,n_i}^{*})\big)$ and the parity of $n_i$). Similarly,
$$\var\left(\widetilde{M}f_j,\widetilde{\mathcal{P}}_{i}\right)-\var\left(\widetilde{M}f_j,\{a_{i,1}, a_{i,n_{i}-1}\}\right)\le \sum_{k=1}^{n_{i}-1} \alpha_{k}\widetilde{M}f_{j}(a_{i,k})$$
(we eventually change the sign of the second term on the left-hand side in order to get the boundary coefficients equal to $\alpha_1$ and $\alpha_{n_i-1}$). Consequently, the claim follows since for each $k$ we have 
$$\alpha_{k}\left(|f_{j}|(a_{i,k}^{*})-\widetilde{M}f_j(a_{i,k})\right)\ge \frac{-\varepsilon}{n_i K}.$$
Now, we apply the claim in order to get the following estimate
\begin{align*}
\var_{(a,b)}(|f_j|)-\var(|f_j|,\mathcal{P})&\ge 
\var\Big(|f_j|, \mathcal{P} \cup \bigcup_{i=1}^K \widetilde{\mathcal{P}}_{i}^{*}\Big) - \var\Big(|f_j|, \mathcal{P} \cup \bigcup_{i=1}^K \{a_{i,1}^{*},a_{i,n_i-1}^{*}\} \Big)
\\
&=\sum_{i=1}^{K}\var\Big(|f_j|,\widetilde{\mathcal{P}}_{i}^{*}\Big)-\var\Big(|f_j|,\{a_{i,1}^{*},a_{i,n_{i}-1}^{*}\}\Big)\\
&\ge \sum_{i=1}^{K} \var\left(\widetilde{M}f_j,\widetilde{\mathcal{P}}_{i}\right)-\var\left(\widetilde{M}f_j,\{a_{i,1},a_{i,n_i-1}\}\right)-\frac{\varepsilon}{K}\\
&\ge \var\Big(\widetilde{M}f_{j},\widetilde{\mathcal{P}}\Big)-\var\Big(\widetilde{M}f_{j},\widetilde{\widetilde{\mathcal{P}}}\Big)-\varepsilon,
\end{align*}
where $\widetilde{\widetilde{\mathcal{P}}}:=\{a_{i,k};i\in \{1,\dots, K\},k\in \{0,1,n_i-1,n_i\}\}.$
In particular, we note that $\widetilde{\widetilde{\mathcal{P}}}$ consists of at most $3K+1$ elements and thus 
$$\var\Big(\widetilde{M}f_j,\widetilde{\widetilde{\mathcal{P}}}\Big)<\var\Big(\widetilde{M}f,\widetilde{\widetilde{\mathcal{P}}}\Big)+12 K \|f_j-f\|_{BV}<\var_{(a,b)}\big (\widetilde{M}f\big)+\varepsilon$$
follows by Lemma \ref{uniformconvergence} for $j$ big enough. Combining the above inequalities with \eqref{aproximation2}, we arrive at
$$\var_{(a,b)}(|f_j|)-\var(|f_j|,\mathcal{P})>\var_{(a,b)}\big (\widetilde{M}f_j\big)-\var_{(a,b)}\big (\widetilde{M}f\big)-4\varepsilon,$$
which, in view of \eqref{aproximation1}, gives
$$\var_{(a,b)}\big (\widetilde{M}f_j\big)<\var_{(a,b)}\big (\widetilde{M}f\big)+6\varepsilon,$$
provided that $j$ is big enough. Consequently, $\underset{j\to \infty}{\lim} \var_{(a,b)}\big (\widetilde{M}f_j\big)=\var_{(a,b)}\big (\widetilde{M}f\big).$
\end{proof}
Having obtained Proposition \ref{keyproposition} we continue with the remaining tools required. Our general purpose in the next few lemmas is to get more information about the derivative of maximal function. In particular, we are interested in studying the behavior of the sequence $\big\{ \big (\widetilde{M}f_j\big)'(x)\big \}_{j \in \mathbb{N}}$ for a given point $x$.   

\begin{lemma}\label{accumulation}
Fix $f\in BV(\mathbb{R})$ and let $\{f_j;j\in \mathbb{N}\}\subset BV(\mathbb{R})$ be such that $\underset{j\to \infty}{\lim}\|f_j-f\|_{BV}=0.$ If $\intav{I_{x,j}}|f_j|=\widetilde{M}f_j(x)$ with $I_{x,j}\ni x$, and $\chi_{I_{x,j}}\to \chi_{I}$ a.e. with $0<m(I)<\infty$, then we have $\intav{I}|f|=\widetilde{M}f(x).$
\end{lemma}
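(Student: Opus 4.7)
The strategy is to pass to the limit in the identity $\intav{I_{x,j}}|f_j| = \widetilde{M}f_j(x)$. The right-hand side converges to $\widetilde{M}f(x)$ directly by Lemma \ref{uniformconvergence}, so the entire argument reduces to showing that the left-hand side converges to $\intav{I}|f|$. This in turn splits naturally into (a) a geometric step, establishing convergence of the intervals $I_{x,j}$ in a strong enough sense, and (b) an integration step.

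For the geometric step, write $I_{x,j}=(\alpha_j,\beta_j)$ (closed/half-open variants do not affect the integrals). Using that $x\in I_{x,j}$ for every $j$ and that $\chi_{I_{x,j}}\to \chi_I$ almost everywhere with $0<m(I)<\infty$, I would verify that $I$ is (equivalent modulo null sets to) an interval $(\alpha,\beta)$ with $\alpha_j\to\alpha$ and $\beta_j\to \beta$: indeed, for a.e.\ $y\in(\alpha,\beta)$ the hypothesis forces $\alpha_j<y<\beta_j$ eventually, while for a.e.\ $y$ outside $[\alpha,\beta]$ the same hypothesis together with $x\in I_{x,j}$ forces $y$ to lie on the \emph{correct} side of $I_{x,j}$ eventually. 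In particular $m(I_{x,j})\to m(I)$, the intervals $I_{x,j}$ are eventually contained in a common bounded set, and $x\in [\alpha,\beta]$ so that $I$ may be chosen to contain $x$.

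For the integration step, decompose
\[
\int_{I_{x,j}}|f_j| \;=\; \int_{I_{x,j}}|f| \;+\; \int_{I_{x,j}}\bigl(|f_j|-|f|\bigr).
\]
The second term is bounded in absolute value by $m(I_{x,j})\,\||f_j|-|f|\|_\infty\le 2\,m(I_{x,j})\,\|f_j-f\|_{BV}$ thanks to Lemma \ref{uniformconvergence}, which tends to $0$ since $m(I_{x,j})$ stays bounded. The first term equals $\int \chi_{I_{x,j}}|f|$, and since $|f|$ is bounded and the integrands are supported in a common bounded set, dominated convergence together with $\chi_{I_{x,j}}\to\chi_I$ a.e.\ yields $\int_{I_{x,j}}|f|\to \int_I |f|$. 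Dividing by $m(I_{x,j})\to m(I)>0$ gives $\intav{I_{x,j}}|f_j|\to \intav{I}|f|$. Combined with $\widetilde{M}f_j(x)\to \widetilde{M}f(x)$ from Lemma \ref{uniformconvergence}, this produces $\intav{I}|f|=\widetilde{M}f(x)$, as desired.

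I do not anticipate a genuine obstacle here; the only point requiring care is the geometric lemma that $\chi_{I_{x,j}}\to \chi_I$ almost everywhere together with $x\in I_{x,j}$ and $0<m(I)<\infty$ implies convergence of the endpoints (and hence of the measures), so that the dominated convergence argument can be applied on a fixed bounded set.
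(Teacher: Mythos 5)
Your argument is correct and complete: the endpoint-convergence step, the uniform control of $\||f_j|-|f|\|_\infty$ via Lemma~\ref{uniformconvergence}, and the dominated convergence argument all go through. The paper itself only cites a ``slight modification'' of \cite[Lemma 12]{CMP2017} here, and what you have written is essentially that standard argument spelled out in full, so there is nothing to correct.
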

\begin{proof}
Follows a slight modification in \cite[Lemma 12]{CMP2017}.
\end{proof}
 Let us now define $E:=\big\{x\in \mathbb{R}; \widetilde{M}f(x)>\overline{|f|}(x)\big\}.$ Since $\widetilde{M}f$ is absolutely continuous and $\overline{|f|}$ is upper semicontinuous, 
we have that $E$ is open. 
We notice that if $x\in E\setminus \big\{x; \widetilde{M}f(x)=\widetilde{M}f(\infty)\big\}$, then there exists a finite interval $I\ni x$ such that $\intav{I}|f|(x)=\widetilde{M}f(x).$ Indeed, there exists a sequence $\{(a_k,b_k)\}_{k\in \mathbb{N}}$ such that $x\in (a_k,b_k)$ and $\intav{(a_k,b_k)}|f|\to \widetilde{M}f(x).$ Since $\widetilde{M}f(x)>\big\{\overline{|f|}(x),\widetilde{M}f(\infty)\big\}$, we have $\{ b_k-a_k; k \in \mathbb{N} \} \subset (\epsilon, \epsilon^{-1})$ for some $\epsilon > 0$. Thus, by taking a subsequence (if required), we get $a_k\to a$ and $b_k\to b$, with $b-a \in (0, \infty).$ By the boundedness of $f$ we conclude that $\intav{[a,b]}|f|=\widetilde{M}f(x).$ Also, let us observe that $\max \big \{\intav{[a,x]}|f|,\intav{[x,b]}|f|\big \}\ge \intav{[a,b]}|f|,$ therefore $\widetilde{M}f(x)=\intav{[a,x]}|f|$ or $\widetilde{M}f(x)=\intav{[x,b]}|f|.$  

The next result states that for a.e. $x \in E$ the derivative of the maximal function $\widetilde{M}f$ can be described by an explicit formula.  
\begin{lemma}\label{formula}
Let $f\in BV(\mathbb{R})$. Assume that $\widetilde{M}f$ is differentiable and $|f|$ is continuous at $x$ (that happens a.e. because $\widetilde{M}f$ and $|f|$ have bounded variation). Let us suppose that $x\in E$ is such that there exists an interval $I_x\ni x$ with $m(I_x)<\infty$ such that $\intav{I_x}|f|=\widetilde{M}f(x)$ and $I_x \subset [x, \infty)$ or $I_x \subset (- \infty, x]$. Then 
$$
\big (\widetilde{M}f\big)'(x)= \begin{cases}
\frac{\int_{I_x}|f|}{(m(I_x))^2}-\frac{|f|(x)}{m(I_x)} = \frac{1}{m(I_x)} \left( \widetilde{M}f(x) - |f|(x) \right) \quad {\rm if }  \ I_x \subset [x, \infty), \\
\frac{|f|(x)}{m(I_x)} - \frac{\int_{I_x}|f|}{(m(I_x))^2} = \frac{1}{m(I_x)} \left(|f|(x) - \widetilde{M}f(x) \right) \quad { \rm otherwise.}
\end{cases}
$$
Also, if $\widetilde{M}f(x)=\widetilde{M}f(\infty)$, then we have $\big (\widetilde{M}f\big)'(x)=0$.
\end{lemma}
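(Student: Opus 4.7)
The plan is to exploit the tangency of $\widetilde{M}f$ with an explicit test function at $x$. Consider first the case $I_x \subset [x,\infty)$, which together with $x \in I_x$ forces $x$ to be the left endpoint of $I_x$; writing $h := m(I_x) > 0$, we may take $I_x = [x, x+h]$ up to a measure zero set. For any $y$ in a sufficiently small neighborhood of $x$ we have $y < x+h$, so the interval $[y, x+h]$ contains $y$ and is a valid competitor in the supremum defining $\widetilde{M}f(y)$. Setting
$$
\phi(y) := \frac{1}{x+h-y}\int_y^{x+h}|f|(t)\,dt,
$$
this gives $\widetilde{M}f(y) \geq \phi(y)$ on a neighborhood of $x$, with equality at $y = x$ by the defining property of $I_x$.

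Because $|f|$ is continuous at $x$, the fundamental theorem of calculus applies and $\phi$ is differentiable at $x$ with
$$
\phi'(x) = \frac{\int_{I_x}|f|}{(m(I_x))^2} - \frac{|f|(x)}{m(I_x)} = \frac{\widetilde{M}f(x) - |f|(x)}{m(I_x)}.
$$
The difference $g := \widetilde{M}f - \phi$ is nonnegative near $x$, vanishes at $x$, and is differentiable there (by the standing assumption on $\widetilde{M}f$ and by the computation above). Thus $x$ is a local minimum of $g$, whence $g'(x) = 0$, which yields $\big(\widetilde{M}f\big)'(x) = \phi'(x)$ as desired. The symmetric case $I_x \subset (-\infty, x]$ runs identically using $\phi(y) := \frac{1}{y-(x-h)}\int_{x-h}^y |f|(t)\,dt$, producing the formula with the opposite sign.

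For the final assertion, the proof of Lemma \ref{infty} already shows that $\widetilde{M}f(y) \geq c$ for every $y \in \mathbb{R}$, where $c = \max\{|f|(\infty),|f|(-\infty)\} = \widetilde{M}f(\infty)$. Hence, if $\widetilde{M}f(x) = \widetilde{M}f(\infty)$ then $x$ is a global minimum of $\widetilde{M}f$, and the assumed differentiability at $x$ forces $\big(\widetilde{M}f\big)'(x) = 0$. No major obstacle is anticipated: the whole argument is a soft comparison against a single one-parameter family of test intervals, and all of the quantitative work reduces to a single application of the quotient rule combined with continuity of $|f|$ at $x$.
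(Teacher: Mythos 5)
Your proof is correct and takes essentially the same route as the paper: the paper's argument bounds the two one-sided difference quotients of $\widetilde{M}f$ at $x$ using exactly the competitor intervals $[y,x+h]$, which is your tangency function $\phi$ unpacked into its left and right limits, and the final assertion is likewise disposed of by noting that $x$ is a minimum of $\widetilde{M}f$. No gaps.
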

\begin{proof}
The last claim follows because $x$ is a local minimum of $\widetilde{M}f.$ Assume without loss of generality that $I_x=(x,a_x)$, $a_x > x$ (the other case is similar). We have, for $h>0$, that 
$$
\frac{\widetilde{M}f(x)-\widetilde{M}f(x-h)}{h} \le \frac{\intav{(x,a_x)}|f|-\intav{(x-h,a_x)}|f|}{h}
=\frac{\frac{\int_{x}^{a_x}|f|}{a_x-x}-\frac{\int_{x}^{a_x}|f|+\int_{x-h}^{x}|f|}{a_x-x+h}}{h} 
\to \frac{\int_{I_x}|f|}{(a_x-x)^2}-\frac{|f|(x)}{a_x-x}
$$
as $h\to 0.$ Therefore $\big (\widetilde{M}f\big)'(x)\le \frac{\int_{I_x}|f|}{(a_x-x)^2}-\frac{|f(x)|}{a_x-x}.$ 
Also, for $h>0$ we have 
$$
\frac{\widetilde{M}f(x+h)-\widetilde{M}f(x)}{h} \ge \frac{\frac{\int_{x+h}^{a_x}|f|}{a_x-x-h}-\frac{\int_{x}^{a_x}|f|}{a_x-x}}{h} 
\to \frac{\int_{I_x}|f|}{(a_x-x)^2}-\frac{|f|(x)}{a_x-x}
$$
as $h\to 0.$ This concludes the proof.
\end{proof}
Now, we can use the obtained formula to prove the following result regarding pointwise convergence. 
\begin{lemma}\label{pointwiseconvergence}
Fix $f\in BV(\mathbb{R})$ and let $\{f_j;j\in \mathbb{N}\}\subset BV(\mathbb{R})$ be such that $\underset{j\to \infty}{\lim}\|f_j-f\|_{BV}=0.$ Then 
$$\big (\widetilde{M}f_j\big)'\to \big (\widetilde{M}f\big)'$$
a.e. in $E.$ 
\end{lemma}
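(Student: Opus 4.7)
The plan is to split $E$ according to whether $\widetilde{M}f(x)$ equals $c := \widetilde{M}f(\infty)$ from Lemma \ref{infty}, and in each case to exploit the explicit formula of Lemma \ref{formula} together with the identification of limits provided by Lemma \ref{accumulation}. First I would discard the null set of $x \in E$ at which $\widetilde{M}f$ or some $\widetilde{M}f_j$ fails to be differentiable, or at which $|f|$ or some $|f_j|$ fails to be continuous; this is a countable union of null sets and is therefore still null.

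For the main case $\widetilde{M}f(x) > c$, the discussion preceding Lemma \ref{formula} produces a finite interval $I_x$ with $x$ as an endpoint such that $\widetilde{M}f(x) = \intav{I_x}|f|$. Combining Lemmas \ref{infty} and \ref{uniformconvergence} with the fact that $|f_j|(\pm \infty) \to |f|(\pm \infty)$ shows that, for $j$ large, $\widetilde{M}f_j(x) > c_j := \widetilde{M}f_j(\infty)$, which by the same discussion yields a finite one-sided maximizing interval $I_{x,j}$ for $f_j$ at $x$. The crucial observation is \emph{uniqueness}: at our good $x$, Lemma \ref{formula} applied to any one-sided maximizer $J$ with, say, $J=(x,a)$ forces $\big(\widetilde{M}f\big)'(x) = |J|^{-1}\big(\widetilde{M}f(x) - |f|(x)\big)$, and since $\widetilde{M}f(x) - |f|(x) > 0$ (as $x \in E$ and $|f|$ is continuous at $x$), this quantity depends nontrivially on both $|J|$ and the orientation, so the one-sided maximizer must be unique. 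A standard compactness argument on endpoints, with $|I_{x,j}|$ pinched away from $0$ and $\infty$ using that $\widetilde{M}f(x)$ is strictly larger than both $|f|(x)$ and $c$, together with Lemma \ref{accumulation} then forces $\chi_{I_{x,j}} \to \chi_{I_x}$ a.e., hence $|I_{x,j}| \to |I_x|$. Inserting $\widetilde{M}f_j(x) \to \widetilde{M}f(x)$, $|f_j|(x) \to |f|(x)$, and $|I_{x,j}| \to |I_x|$ into the formula of Lemma \ref{formula} applied to both $f$ and $f_j$ delivers $\big(\widetilde{M}f_j\big)'(x) \to \big(\widetilde{M}f\big)'(x)$.

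For the degenerate case $\widetilde{M}f(x) = c$, note that $x$ is a global minimum of $\widetilde{M}f$, and since $\widetilde{M}f$ is absolutely continuous, on the level set $\{\widetilde{M}f = c\}$ its derivative vanishes a.e.; hence $\big(\widetilde{M}f\big)'(x) = 0$ at a.e. such $x$. For each $j$, either $\widetilde{M}f_j(x) = c_j$, in which case $x$ is a global minimum of $\widetilde{M}f_j$ and $\big(\widetilde{M}f_j\big)'(x) = 0$, or $\widetilde{M}f_j$ admits a finite one-sided maximizer $I_{x,j}$. In the latter case, if $\{|I_{x,j}|\}$ had a bounded subsequence, compactness and Lemma \ref{accumulation} would produce a finite maximizing interval $I$ for $f$ at $x$, and Lemma \ref{formula} would give $\big(\widetilde{M}f\big)'(x) = |I|^{-1}(c - |f|(x)) > 0$, contradicting $\big(\widetilde{M}f\big)'(x) = 0$. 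Hence $|I_{x,j}| \to \infty$, and the formula of Lemma \ref{formula} applied to $f_j$ (with bounded numerator $\widetilde{M}f_j(x) - |f_j|(x)$) forces $\big(\widetilde{M}f_j\big)'(x) \to 0$.

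The subtlest step is the uniqueness argument in the main case. The formula of Lemma \ref{formula}, once $\big(\widetilde{M}f\big)'(x)$ is known to exist, acts as a rigid algebraic constraint on both the length and the orientation of any one-sided maximizer, so two distinct one-sided maximizers at the same good $x$ would produce two distinct values of $\big(\widetilde{M}f\big)'(x)$, which is impossible. This rigidity is precisely what upgrades the subsequential convergence of maximizers supplied by Lemma \ref{accumulation} to convergence of the full sequence, and it is this full convergence that is then transported through the explicit derivative formula to the sought pointwise convergence.
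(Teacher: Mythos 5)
Your proof is correct and follows essentially the same route as the paper: the same case split on whether $\widetilde{M}f(x)$ exceeds $\widetilde{M}f(\infty)$, the same compactness argument on the endpoints of one-sided maximizing intervals (with lengths pinched via $\widetilde{M}f(x)>|f|(x)$ and $\widetilde{M}f(x)>\widetilde{M}f(\infty)$), and the same use of Lemmas \ref{accumulation} and \ref{formula} to pass to the limit in the derivative formula. The only cosmetic difference is that you upgrade subsequential convergence of the maximizers to full convergence via a uniqueness argument, whereas the paper simply runs a contradiction on a $\delta$-separated subsequence; since Lemma \ref{formula} forces every limiting maximizer to yield the same derivative value $\big(\widetilde{M}f\big)'(x)$, uniqueness is not actually needed.
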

\begin{proof}
The claim is trivial if $E$ has measure zero, so assume this is not the case. We define $E_j$ as the analogue of $E$ for $f_j.$
Let us take $x\in E$ such that $\widetilde{M}f_j$ and $\widetilde{M}f$ are differentiable at $x$ for every $j$ and $f$ and $f_j$ is continuous at $x$. By the uniform convergence we have that $x\in E_j$ for $j$ big enough. We also make the following observation. If there are intervals $I_{x,j}\ni x$ such that $\intav{I_{x,j}}|f_j|=\widetilde{M}f_j(x)$, then the quantities $m(I_{x,j})$ are bounded below uniformly. Indeed, if for a sequence $\{j_k\}_{k\in \mathbb{N}}$ we have $m(I_{x,j_k})\to 0$, then we would have $\intav{I_{x,j_k}}|f_{j_k}|\to |f|(x) < \widetilde Mf(x)$ by the uniform convergence and continuity of $f$ at $x$, contradicting the pointwise convergence of the maximal functions. 

Assume first that $x\in E\setminus \big \{y;\widetilde{M}f(y)=\widetilde{M}f(\infty)\big \}$ 
and take $\varepsilon>0$ such that $\widetilde{M}f(x)>\widetilde{M}f(\infty)+2\varepsilon.$ Then for $j$ big enough we have $\widetilde{M}f_j(x)>\widetilde{M}f_j(\infty)+\varepsilon.$ Also, there exists $N>|x|$ such that for $j$ big enough and each $y \in \mathbb{R}\setminus [-N,N]$ we have $|f_j|(y)<\widetilde{M}f(\infty)+\varepsilon < \widetilde{M}f_j(x)$. We can observe then that $I_{x,j}\subset [-N,N]$ for $j$ big enough. 
Let us assume that we have $\delta>0$ and a sequence $\{j_k\}_{k\in \mathbb{N}}$ such that  \begin{align}\label{epsiloncontradiction}
\left|\big (\widetilde{M}f_{j_k}\big)'(x) -\big (\widetilde{M}f\big)'(x)\right|>\delta.
\end{align}
Without loss of generality assume that $I_{j_k}=(x,a_{j_k})$ (the other case is treated analogously). 
Since $x<a_{j_k}<N$, there exists a subsequence (that we also denote by $j_k$) such that $a_{j_k}\to a\in [x,N].$ Moreover, in view of the previous observation, we have $a \neq x$. Thus, Lemma \ref{accumulation} gives $\intav{(x,a)}|f|=\widetilde{M}f(x)$ and consequently, in view of Lemma \ref{formula}, we obtain $\big (\widetilde{M}f\big)'(x)=\frac{\int_{(x,a)}|f|}{(a-x)^2}-\frac{|f|(x)}{a-x}$. Also, $\big (\widetilde{M}f_{j_k}\big)'(x)=\frac{\int_{I_{x,j}}|f_{j_k}|}{(a_{j_k}-x)^2}-\frac{|f_{j_k}|(x)}{a_{j_k}-x}$ holds. However, by the uniform convergence we have    $$\frac{\int_{I_{x,j}}|f_{j_k}|}{(a_{j_k}-x)^2}-\frac{|f_{j_k}|(x)}{a_{j_k}-x}\to \frac{\int_{(x,a)}|f|}{(a-x)^2}-\frac{|f|(x)}{a-x},$$ reaching a contradiction with \eqref{epsiloncontradiction}. Thus, we conclude this case.

Now, if $\widetilde{M}f(x)=\widetilde{M}f(\infty),$ then by Lemma \ref{formula} we have $\big (\widetilde{M}f\big)'(x)=0.$ Also, if for a subsequence $j_k$ we have $\widetilde{M}f_{j_k}(x)=\widetilde{M}f_{j_k}(\infty)$, then $\big (\widetilde{M}f_{j_k}\big)'(x)=0.$ Therefore, this subcase follows and we can assume that $x\in E_j\setminus \big\{\widetilde{M}f_j(x)=\widetilde{M}f_j(\infty)\big\}.$ It is now enough to prove that 
$\frac{\int_{I_{x,j}}|f_j|}{(a_j-x)^2}-\frac{|f_j|(x)}{a_j-x}\to 0.$ Let us suppose that for some $\delta>0$ and a subsequence $j_k$ we have $\big|\big (\widetilde{M}f_{j_k}\big)'\big|>\delta.$ As before, we assume the case $I_{j_k}=(x,a_{j_k})$. We claim that there exists $C(\delta,f)>0$ such that for $j_k$ big enough we have $m(I_{x,j_k})<C(\delta,f)<\infty.$ Indeed, in view of $$\frac{2 \|f_{j_k}\|_{\infty}}{m(I_{x,j_{k}})}>\left|\frac{\int_{I_{x,j_{k}}}|f_{j_k}|}{(a_{j_k}-x)^2}-\frac{|f_{j_k}|(x)}{a_{j_k}-x}\right|>\delta,$$ 
we have $\frac{2\|f_{j_k}\|_{\infty}}{\delta}>m(I_{x,{j_k}})$ and thus $\|f_{j_k}\|_{\infty} \to \|f\|_{\infty}$ gives our claim. 
Now, since $m(I_{x,j_{k}})<C(\delta,f),$ we have that $a_{j_k}\in (x,x+C(\delta,f))$ for $j$ big enough. Consequently, there exists a subsequence (that we also denote by $j_k$) such that $a_{j_k}\to a$ for some $a\in (x,x+C(\delta,f)].$ Then by Lemma \ref{accumulation} we have that $\intav{(x,a)}|f|=\widetilde{M}f(x).$ Therefore, Lemma \ref{formula} gives
$$\frac{\int_{(x,a)}|f|}{(a-x)^2}-\frac{|f|(x)}{a-x}=\big (\widetilde{M}f\big)'(x)$$
and the left-hand side must be equal to $0$.
Since we have $$\big (\widetilde{M}f_{j_k}\big)'(x)= \frac{\int_{I_{x,j_{k}}}|f_{j_k}|}{(a_{j_k}-x)^2} -\frac{|f_{j_k}|(x)}{a_{j_k}-x} \to \frac{\int_{(x,a)}|f|}{(a-x)^2}-\frac{|f|(x)}{a-x}=0$$ by the uniform convergence, we reach a contradiction. This concludes the proof.  
\end{proof}

It remains to take a look at the set $C:=\mathbb{R}\setminus E$. 


\begin{lemma}\label{zeroderivative}
Let $f\in BV(\mathbb{R})$. Then for a.e. $x\in C$ we have $\big (\widetilde{M}f\big)'(x)=0.$
\end{lemma}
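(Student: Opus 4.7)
The strategy is to identify a full-measure subset $G \subset C$ of ``good'' points where the conclusion can be verified directly, by combining the defining property of $C$ with a density-point argument. Let $G$ consist of those $x \in C$ satisfying: (i) $x$ is a Lebesgue point of $|f|$, so that $\overline{|f|}(x) = |f|(x)$ and hence $\widetilde{M}f(x) = |f|(x)$; (ii) $|f|$ is differentiable at $x$; (iii) $\widetilde{M}f$ is differentiable at $x$; and (iv) $x$ is a point of density of $C$. Since $|f|, \widetilde{M}f \in BV(\mathbb{R})$ with $\widetilde{M}f$ absolutely continuous (by \cite{AP2007}), and since $C$ is closed (as $E$ is open), each of (i)--(iv) holds off a null set, so $|C \setminus G| = 0$. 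It therefore suffices to prove $(\widetilde{M}f)'(x) = 0$ for every $x \in G$.

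The first key step is to show $|f|'(x) = 0$ for each $x \in G$. The definition of $\widetilde{M}f$ together with $x \in C$ gives $\intav_I |f| \leq \widetilde{M}f(x) = |f|(x)$ for every interval $I \ni x$. Applying this to $I = (x - \delta, x + \epsilon)$ with $\delta, \epsilon > 0$ and using the expansion $|f|(x + s) = |f|(x) + |f|'(x)\, s + o(s)$,
$$
\intav_{(x - \delta,\, x + \epsilon)} |f| = |f|(x) + \tfrac{1}{2} |f|'(x)(\epsilon - \delta) + o(\epsilon + \delta) \leq |f|(x).
$$
Letting $\delta \ll \epsilon \to 0$ forces $|f|'(x) \leq 0$; the symmetric scaling $\epsilon \ll \delta \to 0$ forces $|f|'(x) \geq 0$; hence $|f|'(x) = 0$.

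The second step transfers this to $\widetilde{M}f$ using (iv). Let $L$ denote the set of Lebesgue points of $|f|$. Since $x$ is a density point of $C$ and $|L^{\rm c}| = 0$, the set $C \cap L$ has density $1$ at $x$, so one can pick a sequence $y_n \to x$ with $y_n \neq x$ and $y_n \in C \cap L$. For each such $y_n$ we have $\widetilde{M}f(y_n) = \overline{|f|}(y_n) = |f|(y_n)$. Using the differentiability of $\widetilde{M}f$ at $x$ (condition (iii)) and of $|f|$ at $x$ (condition (ii)),
$$
(\widetilde{M}f)'(x) = \lim_{n \to \infty} \frac{\widetilde{M}f(y_n) - \widetilde{M}f(x)}{y_n - x} = \lim_{n \to \infty} \frac{|f|(y_n) - |f|(x)}{y_n - x} = |f|'(x) = 0.
$$

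The main obstacle will be the first step: extracting the pointwise identity $|f|'(x) = 0$ from the local average bound $\intav_I |f| \leq |f|(x)$ requires a careful asymmetric scaling of the interval $I$ to isolate the first-order contribution $|f|'(x)(\epsilon - \delta)/2$, and relies on the fact that differentiability at the single point $x$ yields a true $o(\epsilon + \delta)$ error in the Taylor expansion of the average (obtained by integrating the pointwise bound $|R(s)| \leq \eta|s|$ for $|s|$ small). Once $|f|'(x) = 0$ is in hand, the density-point argument identifying $(\widetilde{M}f)'(x)$ with $|f|'(x)$ is essentially formal.
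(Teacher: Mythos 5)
Your proof is correct. It rests on the same underlying mechanism as the paper's argument --- the inequality $\intav{I}|f|\le \widetilde{M}f(x)$ for all $I\ni x$, combined with $\widetilde{M}f(x)=\overline{|f|}(x)$ on $C$, is incompatible with a nonzero slope at $x$ --- but you implement it differently at both stages. For the slope of the data: the paper shows $\overline{|f|}'(x)=0$ by contradiction, averaging the linear lower bound $\overline{|f|}(x+h)\ge \overline{|f|}(x)+Lh$ over a one-sided interval $(x,x+h_0)$; you instead work with $|f|$ at Lebesgue points and extract $|f|'(x)=0$ from a two-sided asymmetric average $\intav{(x-\delta,x+\epsilon)}|f|\le |f|(x)$ via a Taylor expansion with a correctly controlled $o(\epsilon+\delta)$ error. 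For the transfer to $\widetilde{M}f$: the paper uses the one-line first-derivative test (the nonnegative differentiable function $\widetilde{M}f-\overline{|f|}$ vanishes at $x\in C$, hence $(\widetilde{M}f)'(x)=\overline{|f|}'(x)$), whereas you use a Lebesgue density argument, comparing difference quotients along a sequence $y_n\to x$ in $C\cap L$ where $\widetilde{M}f(y_n)=|f|(y_n)$. Your route is slightly longer and requires the extra hypothesis that $x$ be a density point of $C$ (harmless, as this holds a.e.; note that closedness of $C$ is not needed for this, only measurability), but it has the mild advantage of never invoking differentiability of the upper semicontinuous representative $\overline{|f|}$, only that of $|f|$ itself. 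Both versions are complete and rigorous.
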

\begin{proof}
Assume that $\overline{|f|}$ and $\widetilde{M}f$ are differentiable at $x$ (this happens a.e. because $\overline{|f|}$ and $\widetilde{M}f$ have bounded variation). 
Then, since $\widetilde{M}f(x)=\overline{|f|}(x)$ and $\widetilde{M}f\ge \overline{|f|}$, we have $\big (\widetilde{M}f\big)'(x)=\overline{|f|}'(x).$ Now, assume, in order to get a contradiction, that $\overline{|f|}'(x)>0$ (the other case is analogous). Then there exist $h_0,L>0$ such that $\overline{|f|}(x+h)\ge \overline{|f|}(x)+Lh$ for every $0<h<h_0.$ Thus, for a.e. $0<h<h_0$ we have $|f|(x+h)\ge \overline{|f|}(x)+Lh$, which implies $\widetilde{M}f(x)\ge \frac{\int_{0}^{h_0}\overline{|f|}(x)+Lh}{h_0}=\overline{|f|}(x)+\frac{Lh_0}{2}>\overline{|f|}(x),$
a contradiction.
\end{proof}
Combining the previous results we obtain the following.
\begin{corollary}\label{corolario}
Fix $f\in BV(\mathbb{R})$ and let $\{f_j;j\in \mathbb{N}\} \subset BV(\mathbb{R})$ be such that $\underset{j\to \infty}{\lim}\|f_j-f\|_{BV}=0.$ Then 
$$\left \|\left(\big (\widetilde{M}f_j\big)'-\big (\widetilde{M}f\big)'\right)\chi_{E}\right \|_1\to 0.$$
\end{corollary}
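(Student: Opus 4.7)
My plan is to combine the pointwise convergence on $E$ (Lemma~\ref{pointwiseconvergence}) with the convergence of total variations (Proposition~\ref{keyproposition}), and then upgrade to $L^1$ convergence via a Scheff\'e/Brezis--Lieb style argument.

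First, since $\widetilde{M}f$ and each $\widetilde{M}f_j$ are absolutely continuous (by the Aldaz--P\'erez L\'azaro result cited after Lemma~\ref{aldazperezlazaro}), we have the identities
\[
\int_{\mathbb{R}}\bigl|\bigl(\widetilde{M}f\bigr)'\bigr| = \var_{(-\infty,\infty)}\bigl(\widetilde{M}f\bigr), \qquad \int_{\mathbb{R}}\bigl|\bigl(\widetilde{M}f_j\bigr)'\bigr| = \var_{(-\infty,\infty)}\bigl(\widetilde{M}f_j\bigr).
\]
By Lemma~\ref{zeroderivative} the derivative $(\widetilde{M}f)'$ vanishes a.e.\ on $C = \mathbb{R}\setminus E$, so the first integral equals $\int_E|(\widetilde{M}f)'|$. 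Combining this with Proposition~\ref{keyproposition} yields
\[
\lim_{j\to\infty}\int_{\mathbb{R}}\bigl|\bigl(\widetilde{M}f_j\bigr)'\bigr| = \int_{E}\bigl|\bigl(\widetilde{M}f\bigr)'\bigr|.
\]

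Next I would split the integral for $\widetilde{M}f_j$ as $\int_E + \int_C$. By Fatou applied to the pointwise convergence on $E$ from Lemma~\ref{pointwiseconvergence},
\[
\liminf_{j\to\infty}\int_{E}\bigl|\bigl(\widetilde{M}f_j\bigr)'\bigr| \geq \int_{E}\bigl|\bigl(\widetilde{M}f\bigr)'\bigr|.
\]
Since $\int_C|(\widetilde{M}f_j)'| \geq 0$ and the total integrals converge to $\int_E|(\widetilde{M}f)'|$, the reverse inequality $\limsup_j \int_E|(\widetilde{M}f_j)'| \leq \int_E|(\widetilde{M}f)'|$ follows. Hence $\int_E|(\widetilde{M}f_j)'|$ converges to $\int_E|(\widetilde{M}f)'|$ (and, as a byproduct, $\int_C|(\widetilde{M}f_j)'| \to 0$).

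Finally, to pass from $L^1$-norm convergence together with a.e.\ convergence to actual $L^1$ convergence on $E$, I would apply the standard Scheff\'e-type trick: set $g_j := (\widetilde{M}f_j)'\chi_E$ and $g := (\widetilde{M}f)'\chi_E$, and note that the nonnegative functions $|g_j| + |g| - |g_j - g|$ converge a.e.\ to $2|g|$. Fatou's lemma then gives
\[
2\int|g| \leq \liminf_{j\to\infty} \int\bigl(|g_j| + |g| - |g_j - g|\bigr) = 2\int|g| - \limsup_{j\to\infty}\int|g_j - g|,
\]
so $\limsup_j \|g_j - g\|_1 \leq 0$, which is the desired conclusion.

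The main conceptual step, and the one that does all the real work, is the convergence of total variations provided by Proposition~\ref{keyproposition}; everything here is a clean measure-theoretic extraction from that fact together with Lemmas~\ref{pointwiseconvergence} and~\ref{zeroderivative}. I expect no obstacle beyond being careful that the a.e.\ convergence on $E$ combined with the matching of $L^1$ norms genuinely forces $L^1$ convergence, which is exactly what the Scheff\'e argument above delivers.
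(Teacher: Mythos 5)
Your proposal is correct and follows essentially the same route as the paper: both establish $\int_E|(\widetilde{M}f_j)'|\to\int_E|(\widetilde{M}f)'|$ via Fatou's lemma, Proposition \ref{keyproposition}, and Lemma \ref{zeroderivative}, and then upgrade the a.e.\ convergence from Lemma \ref{pointwiseconvergence} to $L^1$ convergence. The only cosmetic difference is that you carry out the Scheff\'e argument by hand where the paper simply cites the Brezis--Lieb lemma.
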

\begin{proof}
By the classic Brezis--Lieb lemma \cite{Brezislieb}, the boundedness of the map $f\mapsto \widetilde{M}f$ from $BV(\mathbb{R})$ to itself and Lemma \ref{pointwiseconvergence}, we just need to prove the following,
\begin{align}\label{blcorollary}
\left\|\big (\widetilde{M}f_j\big)'\chi_E\right\|_1\to \left\|\big(\widetilde{M}f\big)'\chi_E\right\|_1.
\end{align}
By Fatou's Lemma, Proposition \ref{keyproposition} and Lemma \ref{zeroderivative}, we have
$$
\int_{E}\left|\big (\widetilde{M}f\big)'\right| \le \underset{j\to \infty}{\lim \inf} \int_{E}\left|\big (\widetilde{M}f_j\big)'\right| \le \underset{j\to \infty}{\lim \sup} \int_{E}\left|\big (\widetilde{M}f_j\big)'\right|\le \underset{j\to \infty}{\lim} \int_{\mathbb{R}}\left|\big (\widetilde{M}f_j\big)'\right|=\int_{\mathbb{R}}\left|\big (\widetilde{M}f\big)'\right|=\int_{E}\left|\big (\widetilde{M}f\big)'\right|, 
$$
from where \eqref{blcorollary} follows.
\end{proof}

\subsection{Proof of Theorem \ref{maintheorem}} Finally, we are ready to prove the main result. In what follows $C_j$ denotes the set analogous to $C$ defined for $f_j$ instead of $f.$
\begin{proof}Since by Lemmas \ref{infty} and \ref{uniformconvergence} we have $\widetilde{M}f_j(-\infty)\to \widetilde{M}f(-\infty)$, it remains to prove that 
$$\big (\widetilde{M}f_j\big)'\to \big (\widetilde{M}f\big)'$$
in $L^{1}(\mathbb{R}).$
We make the following claim 
\begin{align}\label{claim1}
\int_{C\cap E_j}\left|\big (\widetilde{M}f_j\big)'\right|\to 0.
\end{align}
Indeed, by Proposition \ref{keyproposition}, Lemma \ref{zeroderivative} and Corollary \ref{corolario} we have 
\begin{align*}
\underset{j\to \infty}{\lim} \int_{E}\left|\big (\widetilde{M}f_j\big)'\right|=\int_{E}\left|\big (\widetilde{M}f\big)'\right|=\int_{\mathbb{R}}\left|\big (\widetilde{M}f\big)'\right|&\ge \underset{j\to \infty}{\lim \sup }\left(\int_{E_j\cap C}\left|\big (\widetilde{M}f_j\big)'\right|+\int_{E}\left|\big (\widetilde{M}f_j\big)'\right|\right)\\
&=\underset{j\to \infty}{\lim \sup} \int_{E_j\cap C}\left|\big (\widetilde{M}f_j\big)'\right|+\underset{j\to \infty}{\lim} \int_{E}\left|\big (\widetilde{M}f_j\big)'\right|
\end{align*}
and the claim follows. Consequently, by \eqref{claim1} and Corollary \ref{corolario} we get
\begin{align*}
\int_{\mathbb{R}}\left|\big (\widetilde{M}f_j\big)'-\big (\widetilde{M}f\big)'\right|&=\int_{C\cap E_j}\left|\big (\widetilde{M}f_j\big)'-\big (\widetilde{M}f\big)'\right|+\int_{C\cap C_j}\left|\big (\widetilde{M}f_j\big)'-\big (\widetilde{M}f\big)'\right| 
+\int_{E}\left|\big (\widetilde{M}f_j\big)'-\big (\widetilde{M}f\big)'\right|\\
&=\int_{C\cap E_j}\left|\big (\widetilde{M}f_j\big)'\right|+\int_{E}\left|\big (\widetilde{M}f_j\big)'-\big (\widetilde{M}f\big)'\right|\to 0
\end{align*}
as $j\to \infty,$ from where we conclude our result.
\end{proof}

\subsection{Concluding remarks} We end our discussion by showing that the assumptions $f,f_j\in BV(\mathbb{R})$ are important, not only $f-f_j\in BV(\mathbb{R}).$ 
\begin{example}
	Let $A = \cup_{k=1}^{\infty} (4k-2, 4k)$ and take
	$$
	f = \chi_{(-\infty, 0) \cup A}, \quad {\rm and} \quad f_n = f + \frac{1}{n} \chi_{(0, 4n+2)}. 
	$$
	Then we have $\|f_n - f\|_{BV} \rightarrow 0$, while $\| \widetilde{M}f_n - \widetilde{M}f \|_{BV} \not\rightarrow 0$.
\end{example}

\noindent Indeed, the first claim is obvious and for the second one we argue as follows. We observe that $\widetilde{M} f \equiv 1$ and $\widetilde{M} f_n(x) = 1 + \frac{1}{n}$ for $x \in \{ 3, 7, \dots, 4n-1 \}$. Moreover, if $n \geq 3$, then for any $x \in \{ 1, 5, \dots, 4n + 1 \}$ we have 
$$
\widetilde{M} f_n(x) \leq \max \Big\{1, \frac{2}{3} + \frac{1}{n} \Big\} = 1,
$$
which is due to the fact that for any interval $I \ni x$ we have
$
|I \cap A \cap (0, 4n+2)| \leq \frac{2}{3} |I \cap (0, 4n+2)|.
$
Thus, for $\mathcal{P}_n = \{1, 3, \dots, 4n+1 \}$ we have $\var\big(\widetilde{M} f_n - \widetilde{M} f, \mathcal{P}_n\big) \geq 2n \cdot \frac{1}{n} \not\rightarrow 0$.
\section*{Acknowledgements.}
Cristian Gonz\'{a}lez-Riquelme was supported by CAPES-Brazil. Dariusz Kosz was supported by the National Science Centre of Poland, project no. 2016/21/N/ST1/01496. The
authors are thankful to Emanuel Carneiro for helpful discussions during the preparation of this manuscript. 
\bibliography{Reference}
\bibliographystyle{amsplain} 
\end{document}